\documentclass[12pt,oneside]{article}
\usepackage{amsmath,amssymb,amsfonts,amsthm}
\usepackage{graphicx}
\usepackage{multirow}
\usepackage{rotating}
\textheight = 9.5in            
\textwidth = 6in \leftmargin=1.25in \rightmargin=1.25in
\topmargin=0.75in
\parindent=0.3in
\hoffset -1.3truecm \voffset -3truecm



\newcommand{\set}[1]{\left\{#1\right\}}

\def\paa{\dot{\partial}}


\setlength\arraycolsep{2pt}    

\def\Section#1{\vspace{30truept}\addtocounter{section}{1}\setcounter{thm}{0}
\setcounter{equation}{0}{\noindent\Large\bf
    \arabic{section}.~~#1}\par \vspace{12pt}}

\newtheorem{thm}{Theorem}[section]
\newtheorem{cor}[thm]{Corollary}
\newtheorem{lem}[thm]{Lemma}
\newtheorem{prop}[thm]{Proposition}

\newtheorem{rem}[thm]{Remark}


\newcommand\overcirc[1]{\raisebox{10pt}{\small$\circ$}{\kern-7.5pt}\mbox{$#1$}}
\newcommand\undersym[2]{\raisebox{-6pt}{$#2$}{\kern-5pt}\mbox{$#1$}}
\newcommand\overdiamond[1]{\raisebox{10pt}{\small$\star$}{\kern-7.5pt}\mbox{$#1$}}
\newcommand\overast[1]{\raisebox{10pt}{\small$\ast$}{\kern-7.5pt}\mbox{$#1$}}
\newcommand\overlind[1]{\raisebox{10pt}{\small$\overline{{\hspace{2pt}}\star}$}{\kern-7.5pt}\mbox{$#1$}}
\newcommand\overlinc[1]{\raisebox{10pt}{\small$\overline{{\hspace{2pt}}\circ}$}{\kern-7.5pt}\mbox{$#1$}}
\newcommand\overlina[1]{\raisebox{10pt}{\small$\overline{{\hspace{1pt}}\ast}$}{\kern-7.5pt}\mbox{$#1$}}

\numberwithin{equation}{section}

\begin{document}
\title{\bf{Generalized   $\beta$-conformal change of Finsler metrics}\footnote{ArXiv Number: 0906.5369} }
\author{{\bf Nabil L. Youssef$^\dag$, S. H. Abed$^\ddag$
 and S. G. Elgendi$^\sharp$}}
\date{}

\maketitle                     
\vspace{-1.15cm}
\begin{center}
{$^\dag$Department of Mathematics, Faculty of Science,\\
Cairo University, Giza, Egypt}
\end{center}
\vspace{-0.8cm}
\begin{center}
 nlyoussef2003@yahoo.fr, nyoussef@frcu.eun.eg
\end{center}

\begin{center}
{$^\ddag$Department of Mathematics, Faculty of Science,\\
Cairo University, Giza, Egypt}
\end{center}
\vspace{-0.8cm}
\begin{center}
sabed52@yahoo.fr, sabed@frcu.eun.eg
\end{center}
\vspace{-0.5cm}
\begin{center}
{$^\sharp$Department of Mathematics, Faculty of Science,\\
Benha University, Benha,
 Egypt}
\end{center}
\vspace{-0.8cm}
\begin{center}
salahelgendi@yahoo.com
\end{center}
\smallskip

\vspace{1cm} \maketitle
\smallskip

\noindent{\bf Abstract.} In this paper, we introduce  and investigate a general transformation or change
of Finsler metrics, which is referred  to as a
generalized $\beta$-conformal change:
$$L(x,y) \longrightarrow\overline{L}(x,y) = f(e^{\sigma(x)}L(x,y),\beta(x,y)).$$
This transformation combines both $\beta$-change  and conformal change in a
general setting. The change, under this transformation, of the fundamental
Finsler connections, together with their associated geometric objects, are obtained. Some invariants   and various
special Finsler spaces are investigated under this change. The most important
 changes of Finsler metrics existing in the literature are deduced from the generalized $\beta$-conformal change
as special cases.

\bigskip
\medskip\noindent{\bf Keywords:\/}\, Generalized $\beta$-conformal change, $\beta$-conformal change,
 $\beta$- change,
conformal change, Randers change, Berwald space, Landesberg space, Locally Minkowskian space.

\bigskip
\medskip\noindent{\bf  2000 Mathematics Subject Classification.\/} 53B40,
53B05.

\newpage

\vspace{30truept}\centerline{\Large\bf{Introduction}}\vspace{12pt}
\par
In the context of Riemannian geometry, there is not only a complete
local theory, but also a complete global theory, with many
practitioners working on both approaches. In this sense, Riemannian
geometry is indeed a complete theory. However, the situation in
Finsler geometry is substantially different. Finsler geometry was
first introduced locally by Finsler himself, to be studied by many
eminent mathematicians for its theoretical importance and
applications in the variational calculus, mechanics and theoretical
physics. Moreover, the dependence of the fundamental function $L(x,
y)$ on both the positional argument $x$ and directional argument $y$
offers the possibility to use it to describe the {\bf anisotropic}
properties of the physical space.
\par
Let $F^n=(M,L)$  be an n-dimensional Finsler manifold. For a
differential one-form $\beta(x,dx)=b_{i}(x)dx^{i}$ on $ M$, G.
Randers \cite{r1.11}, in 1941, introduced a  special Finsler space
defined by the change $\overline{L}=L+\beta$, where $L$ is
Riemannian, to consider a unified field theory.\footnote{In 1941, G. Randers published his paper \lq\lq On an
asymmetrical metric in the four-space of general relativity\rq\rq. In
this paper, Randers considered the simplest possible asymmetrical
generalization of a Riemannian metric. Adding a 1-form to the
existing Riemannian struture, he was the first to introduce a
special Finsler space. This space - which became known in the
literature as a Randers space - proved to be mathematically and physically very
important. It was one of the first attempts to study a physical
theory in the wider context of Finsler geometry, although Randers was not aware that the geometry he used was a special type of
Finsler geometry.} M. Masumoto \cite{r1.8}, in 1974, studied Randers
space and generalized Randers space in which $L$ is Finslerian. V.
Kropina \cite{r1.4} introduced the change $\overline{L}=L^2/\beta$,
where $L$ is Reimannian, which has been studied by many authors such
as Shibata \cite{r1.12} and Matsumoto \cite{r1.9}.  Randers  and
Kropina spaces are closely related to physics and so Finsler spaces
with these metrics have been studied by many authors, from various
standpoint in the physical and mathematical aspects (\cite{r1.3},
\cite{r1.5}, \cite{r1.10},  \cite{r1.14}, \cite{r1.15},
\cite{r1.16}). It was also applied to the theory of the
electron microscope by R. S. Ingarden \cite{Ingarden}. For a Kropina
space (the Finsler space equipped with Kropina's metric), there are
close relations between the Kropina metric and the Lagrangian
function of analytic dynamics \cite{r1.12}. In 1984, C. Shibata
\cite{r1.13} studied the general case of any $\beta$-change, that
is, $\overline{L}=f(L,\beta)$ which generalizes many changes in
Finsler geometry (\cite{r1.4}, \cite{r1.8}, \cite{geo.}). In this
context, he investigated the change of torsion and curvature tensors
corresponding to the above transformation. In addition, he also
studied some special Finsler spaces corresponding to specific forms
of the function $f(L, \beta)$.

On the other hand, in 1976, M. Hashiguchi \cite{r1.6} studied  the
conformal change of  Finsler metrics, namely, $ \overline{L} =
e^{\sigma(x)} L$. In particular, he also dealt with the special
conformal transformation named C-conformal.  This change has been
studied by many authors (\cite{r1.7}, \cite{r1.17}). In 2008, S.
Abed (\cite{r1.1}, \cite{r1.2}) introduced the transformation $
\overline{L} = e^{\sigma(x)} L + \beta$, thus generalizing the
conformal, Randers and generalized Randers changes. Moreover, he
established the relationships between some important tensors
associated with $(M, L)$ and the corresponding tensors associated
with $(M,\overline{L})$. He also studied some invariant and
$\sigma$-invariant properties and obtained a relationship between
the Cartan connection associated with $(M, L)$ and the transformed
Cartan connection associated with $(M, \overline{L})$.

 In this paper, we  deal with a general change of Finsler metrics  defined by:
\begin{equation*}\label{change} L(x,y)
\longrightarrow\overline{L}(x,y)=f(e^{\sigma(x)}L(x,y),\beta(x,y))=f(\widetilde{L},\beta),
\end{equation*}
where  $f$ is a positively homogeneous
function of degree one in $\widetilde{L}:= e^{\sigma}L$ and $\beta$. This change
will be referred to as a generalized $\beta$-conformal change. It is
clear that this change is a generalization of the above mentioned
changes and deals simultaneously with $\beta$-change and conformal
change. It combines also the special case of Shibata
($\overline{L}=f(L,\beta)$) and that of Abed
($\overline{L}=e^\sigma L+\beta$).

The present paper is organized as follows. In section 1,   the
relationship between the Cartan connection associated with $(M,
L)$ and the transformed Cartan connection associated with $(M,
\overline{L})$ is obtained  (Theorem \ref{gammA_2} ). The properties
that $\sigma$ being  homothetic,  $b_i$ being Cartan-parallel and the difference tensor being zero
are investigated (Theorems \ref{abd}, \ref{cases} and \ref{berwald}). The coefficients of the
fundamental linear connections of Finsler geometry are computed
(Theorem \ref{final.0}).

In section 2, the torsion and curvature tensor fields of the  fundamental linear
connections, corresponding to a generalized $\beta$-conformal change, are
obtained (Theorem \ref{final}). Some invariants are found
(Corollary \ref{final-1}) and some properties concerning certain
special Finsler spaces are investigated (Theorems \ref{landesberg},
\ref{minkowski.1} and \ref{minkowski.2}).

Finally, in section 3, many interesting changes of Finsler metrics  are
obtained  as special cases form the present change.

\vspace{30truept}\noindent{\Large\bf  Notations}\\

Throughout the present paper, $(M,L)$ denotes an n-dimensional
$C^\infty$ Finsler manifold; L being the fundamental Finsler
function. Let $(x^i)$ be the coordinates of any  point of the base
manifold M  and $(y^i)$ a supporting element at the same point. We
use the following
notations:\\
   $\partial_i$: partial differentiation with respect to $x^i$,\\
   $\dot{\partial}_i$:  partial differentiation
    with respect to  $y^i$ (basis vector fields of the vertical bundle),\\
   $g_{ij}:=\frac{1}{2}\dot{\partial}_i\dot{\partial}_j L^2=\dot{\partial}_i
   \dot{\partial}_jE$:  the
   Finsler metric tensor;  $E:=\frac{1}{2}L^2$:   the energy function,\\
   $l_i:=\dot{\partial}_iL=g_{ij} l^j=g_{ij}\frac{y^j}{L}$: the
    normalized supporting element; $l^i:=\frac{y^i}{L}$,\\
 $l_{ij}:=\dot{\partial}_il_j$,\\
   $h_{ij}:=Ll_{ij}=g_{ij}-l_il_j$:  the angular metric tensor,\\
   \vspace{7pt}$C_{ijk}:=\frac{1}{2}\dot{\partial}_kg_{ij}=\frac{1}{4}\dot{\partial}_i
    \dot{\partial}_j\dot{\partial}_k L^2$:  the Cartan  tensor, \\
   $G^i$: the components of the   canonical spray associated
    with $(M,L)$,\\
    $ N^i_j:=\dot{\partial}_jG^i$: the Barthel or Cartan nonlinear connection
    associated with $(M,L)$,\\
    $G^i_{jh}:=\dot{\partial}_hN^i_j=\dot{\partial}_h\dot{\partial}_jG^i$,\\
    \vspace{7pt}$\delta_i:=\partial_i-N^r_i\dot{\partial}_r$: the basis
     vector fields of the horizontal bundle,\\
      $C^i_{jk}:=g^{ri}C_{rjk}=\frac{1}{2}g^{ir}\dot{\partial}_kg_{rj}$:
     the h(hv)-torsion tensor,\\
     \vspace{7pt}$\gamma^i_{jk}:=\frac{1}{2}g^{ir}(\partial_jg_{kr}+\partial_kg_{jr}-\partial_rg_{jk})$:
        the Christoffel symbols with respect to $\partial_i$,\\
       $\Gamma^i_{jk}:=\frac{1}{2}g^{ir}(\delta_jg_{kr}+\delta_kg_{jr}-\delta_rg_{jk})$:
          the Christoffel symbols with respect to $\delta_i$,\\
          ${\quad\quad\!\!}=\gamma^i_{jk}+g^{it}(C_{jkr}N^r_{t}-C_{tkr}N^r_{j}-C_{jtr}N^r_{k})\vspace{7pt}$.

         We have:
\begin{description}
    \item[-]The canonical spray G:
    $G^h=\frac{1}{2}\gamma^h_{ij}y^iy^j$.
    \item[-]The Barthel connection N:
    $N^i_j=\dot{\partial}_jG^i=G^i_{jh}y^h=\Gamma^i_{jh}y^h$.
    \item[-]The Cartan connection $C\Gamma$:
    $(\Gamma^i_{jk},N^i_j,C^i_{jk})$.
     \item[-]The Chern (Rund) connection $R\Gamma$:
    $(\Gamma^i_{jk},N^i_j,0)$.
    \item[-]The Hashiguchi connection $H\Gamma$:
    $(G^i_{jk},N^i_j,C^i_{jk})$.
    \item[-] The Berwald connection $B\Gamma$:
    $(G^i_{jk},N^i_j,0)$.\vspace{0pt}
\end{description}

For a Finsler connection $(\Gamma^i_{jk},
         N^i_j,C^i_{jk})$, we define:\\
         $X^i_{j\mid k}:=\delta_kX^i_j+X^r_j\Gamma^i_{rk}-X^i_r\Gamma^r_{jk}$: the
    horizontal covariant derivative of $X^i_j$,\\
   $ X^i_j|_k:=\dot{\partial}_kX^i_j+X^r_jC^i_{rk}-X^i_rC^r_{jk}$: the
    vertical  covariant derivative of $X^i_j$.

\vspace{7pt}
Transvecting  with $y^j$ will be denoted by the subscript 0
(excluding $p_0, q_0,
 s_0$). For example, we write $B^i_{j0}$ for $B^i_{jk}y^k$.

\vspace{7pt}
Finally, the following special symbols will also be used:
\begin{description}
  \item[\textbf{-}] $\Theta_{(j,k,r)}\{A_{jkr}\}:=A_{jkr}-A_{krj}-A_{rjk}.$
  \item[\textbf{-}] $\mathfrak{A}_{(j,k)}\{A_{jk}\}:=A_{jk}-A_{kj}$: the alternative sum with
respect to the indices j and k.
\end{description}

 \Section{\textbf{Changes of connections}}
  Let $F^n=(M,L)$  be an n-dimensional $C^{\infty}$ Finsler manifold with
fundamental function $L= L(x,y)$. Consider the following change of
Finsler structures  which will be referred to as  a generalized
$\beta$-conformal change:
\begin{equation}\label{change} L(x,y)
\longrightarrow\\\overline{L}(x,y)=f(e^{\sigma(x)}L(x,y),\beta(x,y)),
\end{equation}
where  $f$ is a positively homogeneous function of degree one in
$e^\sigma L$ and  $\beta$  and \,\, $\beta=b_i(x)dx^i$. Assume
that $\overline{F}^n=(M,\overline{L})$ has the structure of a
Finsler space. Entities related to $\overline{F}^n$ will be
denoted by barred symbols.

We define
   $$f_1:=\frac{\partial f}{\partial \widetilde{L}}\,,\qquad
   f_2:=\frac{\partial f}{\partial \beta}\,,\qquad
f_{12}:=\frac{\partial^2 f}{\partial \widetilde{L}\partial\beta},
\cdots, etc.,$$ where $\widetilde{L}=e^\sigma L$. We use the
following notations:
 \begin{eqnarray*}
   q&:=&ff_2,\hspace{3.8cm} p:=ff_1/L,\\
    q_0&:=&ff_{22},\hspace{3.5cm} p_0:=f^2_2+q_0,\\
    q_{-1}&:=&ff_{12}/L,\hspace{2.6cm} p_{-1}:= q_{-1}+pf_2/f,\\
    q_{-2}&:=&f(e^{\sigma}f_{11}-f_{1}/L)/L^2, \quad p_{-2}:=q_{-2}+e^\sigma
    p^2/f^2.
 \end{eqnarray*}
  Note that the subscript under the the above geometric objects indicates
the degree of homogeneity of these objects. We also use the
notations:
$$b^i=g^{ij}b_j,\quad m_i:= b_i-(\beta/L^2)y_i\neq 0, \quad
\sigma_i:=\partial_i\sigma, \quad p_{02}:=\frac{\partial
p_0}{\partial \beta}.$$

 The following lemmas  enable us to compute the
geometric objects
 associated  with  the space $\overline{F}^n$ obtained from $F^n$
  by a generalized $\beta$-conformal change.
 They can be proved by making use of Euler theorem
 of homogenous functions and the homogeneity properties   of
  $p,\,p_0,\,p_{-1},\,p_{-2}$; $q,\, q_0,\,q_{-1}, \,
 q_{-2}$.
\begin{lem}\label{f} The following identities hold:
\begin{description}
    \item[(a)] $e^\sigma L f_1+\beta f_2=f,$
    \item[(b)] $e^\sigma L f_{12}+\beta f_{22}=0,$
    \item[(c)] $ e^\sigma L f_{11}+\beta
     f_{12}=0$.
\end{description}
\end{lem}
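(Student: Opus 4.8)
The plan is to derive all three identities from Euler's theorem on homogeneous functions applied to $f$ and to its first partial derivatives. Since $f=f(\widetilde L,\beta)$ is positively homogeneous of degree one in its two arguments $\widetilde L=e^\sigma L$ and $\beta$, Euler's theorem gives $\widetilde L\,f_1+\beta\,f_2=f$, which is precisely statement (a) after writing $\widetilde L=e^\sigma L$. I would record this as the base case and then differentiate it to reach (b) and (c).

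For (c), I would differentiate identity (a) with respect to $\widetilde L$ (equivalently, observe that $f_1=\partial f/\partial\widetilde L$ is positively homogeneous of degree zero in $(\widetilde L,\beta)$ and apply Euler's theorem to it): this yields $\widetilde L\,f_{11}+\beta\,f_{12}=0$, i.e. $e^\sigma L\,f_{11}+\beta\,f_{12}=0$. For (b), I would instead use that $f_2=\partial f/\partial\beta$ is positively homogeneous of degree zero in $(\widetilde L,\beta)$, so Euler's theorem gives $\widetilde L\,f_{21}+\beta\,f_{22}=0$; since $f$ is smooth the mixed partials commute, $f_{21}=f_{12}$, and we obtain $e^\sigma L\,f_{12}+\beta\,f_{22}=0$, which is (b). Alternatively (b) follows by differentiating (a) with respect to $\beta$.

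This lemma is essentially bookkeeping, so there is no genuine obstacle; the only point requiring a word of care is the precise meaning of ``positively homogeneous of degree one in $\widetilde L$ and $\beta$'' for $f$, together with the fact that $\partial f/\partial\widetilde L$ and $\partial f/\partial\beta$ inherit homogeneity of degree zero and that the differentiations commute by $C^\infty$-smoothness of $f$. I would state these facts once at the outset and then read off (a), (b), (c) in one or two lines each. No separate induction or case analysis is needed, and the homogeneity degrees are exactly those already encoded in the subscripts of $p,p_0,p_{-1},p_{-2},q,q_0,q_{-1},q_{-2}$ introduced just before the lemma.
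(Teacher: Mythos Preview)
Your proposal is correct and matches the paper's approach exactly: the paper does not spell out a proof of this lemma but simply remarks that these lemmas ``can be proved by making use of Euler theorem of homogeneous functions,'' which is precisely what you do. Your additional observation that (b) and (c) follow either by differentiating (a) or by applying Euler's theorem directly to the degree-zero homogeneous functions $f_1$ and $f_2$ is a helpful elaboration of what the paper leaves implicit.
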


\begin{lem}\label{relations}
The following identities hold:
\begin{description}
    \item[(a)]$ q_0\beta+e^{\sigma} q_{-1}L^2=0,$

    \item[(b)]$q_{-1}\beta+q_{-2}L^2=-p,$

    \item[(c)] $ p_0\beta+e^{\sigma} p_{-1}L^2=q,$

    \item[(d)]$ p_{-1}\beta+p_{-2}L^2=0,$

     \item[(e)]$q\beta+e^{\sigma} pL^2=f^2.$
\end{description}
  \end{lem}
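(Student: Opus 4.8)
The plan is to prove each of the five identities (a)--(e) by direct substitution of the definitions of $p,p_0,p_{-1},p_{-2}$ and $q,q_0,q_{-1},q_{-2}$, and then to collect terms so that one of the three relations of Lemma~\ref{f} produces the required cancellation. Every one of these objects is built from $f$ (or $f^2$) times a first or second partial of $f$, divided by a power of $L$; hence, after multiplying through by the indicated powers of $\beta$ and $L^2$, each identity reduces to an expression in which one of the bracketed combinations $e^\sigma L f_1+\beta f_2$, $e^\sigma L f_{11}+\beta f_{12}$, or $e^\sigma L f_{12}+\beta f_{22}$ appears as a factor and can be replaced using Lemma~\ref{f}(a), (c), (b) respectively.

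Concretely: for (a), $q_0\beta+e^\sigma q_{-1}L^2=ff_{22}\beta+e^\sigma L ff_{12}=f(\beta f_{22}+e^\sigma L f_{12})=0$ by Lemma~\ref{f}(b); for (e), $q\beta+e^\sigma pL^2=f(\beta f_2+e^\sigma L f_1)=f\cdot f=f^2$ by Lemma~\ref{f}(a). For (b), substituting the definitions gives $q_{-1}\beta+q_{-2}L^2=(f/L)(\beta f_{12}+e^\sigma L f_{11})-ff_1/L$, and the bracket vanishes by Lemma~\ref{f}(c), leaving $-ff_1/L=-p$. For (c), one first expands $p_{-1}=q_{-1}+pf_2/f=(ff_{12}+f_1f_2)/L$, so that $p_0\beta+e^\sigma p_{-1}L^2=f_2(\beta f_2+e^\sigma L f_1)+f(\beta f_{22}+e^\sigma L f_{12})$; by Lemma~\ref{f}(a) the first bracket equals $f$ and by Lemma~\ref{f}(b) the second vanishes, giving $f_2f=q$. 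Finally for (d), using $p_{-2}=q_{-2}+e^\sigma p^2/f^2=f(e^\sigma f_{11}-f_1/L)/L^2+e^\sigma f_1^2/L^2$ and grouping $p_{-1}\beta+p_{-2}L^2$ by the coefficients of $f$ and of $f_1$ yields $(f/L)(\beta f_{12}+e^\sigma L f_{11})+(f_1/L)(\beta f_2+e^\sigma L f_1-f)$, where the first bracket vanishes by Lemma~\ref{f}(c) and the second by Lemma~\ref{f}(a).

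I do not expect a genuine conceptual obstacle; the only thing to be careful about is the bookkeeping, especially the correct expansions of the ``$p$'' quantities in terms of the ``$q$'' quantities (the terms $pf_2/f=f_1f_2/L$ in $p_{-1}$ and $e^\sigma p^2/f^2=e^\sigma f_1^2/L^2$ in $p_{-2}$) and keeping every factor of $e^\sigma$ that accompanies an occurrence of $\widetilde L=e^\sigma L$. Once these substitutions are in place, each identity collapses to Lemma~\ref{f} exactly as indicated. (Alternatively, one could bypass Lemma~\ref{f} entirely and argue straight from Euler's theorem for the homogeneous functions listed before the lemma, differentiating the degree-one relation once more in $\widetilde L$ or in $\beta$; but routing everything through Lemma~\ref{f} gives the shortest computation.)
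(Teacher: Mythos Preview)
Your proof is correct and follows essentially the same route as the paper. The paper does not give a detailed argument but simply notes that these identities ``can be proved by making use of Euler theorem of homogeneous functions and the homogeneity properties of $p,\,p_0,\,p_{-1},\,p_{-2};\,q,\,q_0,\,q_{-1},\,q_{-2}$''; your approach---substituting the definitions and reducing each identity to one of the three Euler relations recorded in Lemma~\ref{f}---is precisely a concrete implementation of that instruction, and your alternative remark about applying Euler's theorem directly to the homogeneous quantities themselves is the other natural reading of the paper's hint.
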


\begin{lem}\label{yk}The following identities hold:\vspace{-0.2cm}
\begin{description}
   \item[(a)]$\dot{\partial}_iq=p_0m_i+q/L l_i, $

    \item[(b)]$\dot{\partial}_ip=p_{-1}m_i$,

    \item[(c)]$\dot{\partial}_ip_{0}=p_{02}m_i, $

    \item[(d)]$\dot{\partial}_ip_{-1}=-e^{-\sigma}(\beta/L^2)p_{02}m_i-(p_{-1}/L)l_i,$

\item[(e)]$\dot{\partial}_ip_{-2}=[e^{-\sigma}(\beta^2/L^4)p_{02}-
    (p_{-1}/L^2)]m_i+p_{-1}(2\beta/L^3)l_i.$
\end{description}
\end{lem}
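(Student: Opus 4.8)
\prof The plan is to regard each of $q,p,p_0,p_{-1},p_{-2}$ as a function of $x$ and $y$ that depends on $y$ only through $\widetilde{L}=e^{\sigma(x)}L$ and $\beta=b_i(x)y^i$, and to differentiate it by the chain rule. First I would record the elementary facts $\dot{\partial}_i\widetilde{L}=e^{\sigma}l_i$ and $\dot{\partial}_i\beta=b_i$, together with the decomposition $b_i=m_i+(\beta/L)l_i$ coming from $l_i=y_i/L$; one also has $y^im_i=0$ and $y^il_i=L$, so $\{m_i,l_i\}$ are independent. Combining these, for any $h=h(\sigma,\widetilde{L},\beta)$ that is positively homogeneous of degree $r$ in $y$ I obtain the master identity
$$\dot{\partial}_ih=\frac{\partial h}{\partial\beta}\,m_i+\Big(e^{\sigma}\frac{\partial h}{\partial\widetilde{L}}+\frac{\beta}{L}\,\frac{\partial h}{\partial\beta}\Big)l_i=\frac{\partial h}{\partial\beta}\,m_i+\frac{rh}{L}\,l_i,$$
the last equality being Euler's theorem applied to $h$ as a function of $(\widetilde{L},\beta)$. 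Each of (a)--(e) then reduces to computing the single coefficient $\partial h/\partial\beta$, since the $l_i$-coefficient is dictated by the homogeneity degree ($r=1$ for $q$, and $r=0,0,-1,-2$ for $p,p_0,p_{-1},p_{-2}$ by the subscript convention).

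For (a)--(c) I would differentiate the defining formulas directly: from $q=ff_2$, $\partial q/\partial\beta=f_2^2+ff_{22}=p_0$; from $p=ff_1/L$, $\partial p/\partial\beta=(f_1f_2+ff_{12})/L=q_{-1}+(ff_1/L)(f_2/f)=p_{-1}$ (so the $l_i$-term vanishes); and (c) is simply the definition $p_{02}:=\partial p_0/\partial\beta$. For (d) and (e) the cleanest route is not the explicit formulas but differentiating the identities of Lemma \ref{relations} with respect to $\beta$. Differentiating Lemma \ref{relations}(c), $p_0\beta+e^{\sigma}p_{-1}L^2=q$, and using $\partial q/\partial\beta=p_0$ and $\partial p_0/\partial\beta=p_{02}$, one gets $p_{02}\beta+e^{\sigma}L^2\,\partial p_{-1}/\partial\beta=0$, i.e. $\partial p_{-1}/\partial\beta=-e^{-\sigma}(\beta/L^2)p_{02}$, which is precisely the $m_i$-coefficient in (d). Feeding this into the $\beta$-derivative of Lemma \ref{relations}(d), $p_{-1}\beta+p_{-2}L^2=0$, yields $\partial p_{-2}/\partial\beta=-(\beta\,\partial p_{-1}/\partial\beta+p_{-1})/L^2=e^{-\sigma}(\beta^2/L^4)p_{02}-p_{-1}/L^2$, the $m_i$-coefficient in (e); and the $l_i$-coefficient $-2p_{-2}/L$ becomes $2\beta p_{-1}/L^3$ by Lemma \ref{relations}(d) once more.

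The step I expect to need the most care is purely bookkeeping: several of these objects ($p$, $q_{-1}$, $q_{-2}$, $p_{-1}$, $p_{-2}$) carry explicit factors $e^{\sigma}$ and powers of $L$ multiplying a genuine function of $(\widetilde{L},\beta)$, so in applying $\dot{\partial}_i$ or $\partial/\partial\beta$ I must consistently treat $\sigma(x)$ as constant and use $L=e^{-\sigma}\widetilde{L}$ (for instance $\partial(1/L)/\partial\beta=0$, while $e^{\sigma}/L=1/L$ is a legitimate simplification). With that convention fixed, every $l_i$-coefficient is handed over for free by Euler's theorem and only the $m_i$-coefficients require computation; in fact Lemma \ref{f} is not needed directly for this lemma, although it underlies Lemma \ref{relations}.
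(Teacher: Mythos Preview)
Your proposal is correct and follows essentially the approach the paper indicates (Euler's theorem applied to the homogeneity degrees, plus chain rule through $\widetilde{L}$ and $\beta$); your master identity packages this cleanly, and the shortcut via Lemma~\ref{relations} for (d)--(e) is legitimate and efficient. One small slip: in your final parenthetical you write ``$e^{\sigma}/L=1/L$'', which is false as stated---you presumably mean $e^{\sigma}/\widetilde{L}=1/L$ (equivalently $L=e^{-\sigma}\widetilde{L}$), and with that corrected the bookkeeping remark is fine.
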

\begin{lem}\label{sh1.2}
The following identities hold:
\begin{description}
\item[(a)]$\partial_k q=p_0N^r_km_r+qN^r_kl_r/L+p_0b_{0\mid
    k}+e^{\sigma}L^2p_{-1}\sigma_k$,

    \item[(b)] $\partial_kp=p_{-1}N^r_km_r+p_{-1}b_{0\mid k}+(p-\beta p_{-1})\sigma_k,$

    \item[(c)] $\partial_k p_{0}=p_{02}(N^r_km_r+b_{0\mid k}-\beta  \sigma_k),$

    \item[(d)] $\partial_kp_{-1}=-(p_{-1}/L)N^r_kl_r-e^{-\sigma}(\beta/L^2)(p_{02}N^r_km_r
        +p_{02}b_{0\mid k})+e^{-\sigma}(\beta^2/L^2)p_{02}\sigma_k,$

     \item[(e)]\,$\partial_kp_{-2}=[
    e^{-\sigma}(\beta^2/L^4)p_{02}-(p_{-1}/L^2)]N^r_km_r
    + (2\beta p_{-1}/L^3)N^r_kl_r\\{\quad\quad\quad}+[e^{-\sigma}(\beta^2/L^4)p_{02}
    -(p_{-1}/L^2)]b_{0\mid k}-
     e^{-\sigma}(\beta^3/L^4)p_{02} \,\sigma_k$.
\end{description}
  \end{lem}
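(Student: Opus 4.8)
\noindent\textit{Proof proposal.} The plan is to differentiate each of the scalars $q,\,p,\,p_0,\,p_{-1},\,p_{-2}$ with respect to $x^k$ by the chain rule and then to reduce the outcome to the stated form using the vertical derivatives computed in Lemma \ref{yk} and the algebraic identities of Lemmas \ref{f} and \ref{relations}. The point of departure is the observation that each of these objects, generically denoted $\phi$, can be written as $\phi=e^{c_\phi\sigma}\,g_\phi(\widetilde{L},\beta)$, where $g_\phi$ is built from $f$ and its $\widetilde{L},\beta$-derivatives divided by a power of $\widetilde{L}$, and where the integer $c_\phi$ equals $0$ for $q$ and $p_0$, $1$ for $p$ and $p_{-1}$, and $3$ for $p_{-2}$; this is read off from the definitions after substituting $L=e^{-\sigma}\widetilde{L}$. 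Thus $\phi$ depends on $(x,y)$ only through $\sigma=\sigma(x)$, $\widetilde{L}=e^{\sigma}L$ and $\beta=b_iy^i$, and $\partial\phi/\partial\sigma=c_\phi\phi$. I would also record the elementary identities $\delta_kL=0$ (horizontal covariant constancy of $L$ for the Cartan connection), whence $\partial_kL=N^r_kl_r$ and $\delta_k\widetilde{L}=e^{\sigma}L\,\sigma_k$; and, since $b_i$ and $\sigma$ depend on $x$ alone, $\delta_k\beta=b_{0\mid k}$, so that, after inserting $b_r=m_r+(\beta/L^2)y_r$ and $y_r=Ll_r$,
\[\partial_k\widetilde{L}=e^{\sigma}L\,\sigma_k+e^{\sigma}N^r_kl_r,\qquad \partial_k\beta=b_{0\mid k}+N^r_km_r+(\beta/L)N^r_kl_r.\]

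Inserting these into $\partial_k\phi=(\partial\phi/\partial\widetilde{L})\,\partial_k\widetilde{L}+(\partial\phi/\partial\beta)\,\partial_k\beta+(\partial\phi/\partial\sigma)\,\sigma_k$ and collecting terms, the $N^r_kl_r$- and $N^r_km_r$-contributions combine exactly into $N^r_k\dot{\partial}_r\phi$, since $\dot{\partial}_r\phi=e^{\sigma}l_r(\partial\phi/\partial\widetilde{L})+b_r(\partial\phi/\partial\beta)$ with $b_r=m_r+(\beta/L)l_r$. Writing $\dot{\partial}_i\phi=A_\phi m_i+B_\phi l_i$ as in Lemma \ref{yk}, the same comparison forces $\partial\phi/\partial\beta=A_\phi$ and $e^{\sigma}\,\partial\phi/\partial\widetilde{L}=B_\phi-(\beta/L)A_\phi$, and one obtains the master formula
\[\partial_k\phi=A_\phi N^r_km_r+B_\phi N^r_kl_r+A_\phi\,b_{0\mid k}+\bigl(LB_\phi-\beta A_\phi+c_\phi\phi\bigr)\sigma_k.\]

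It then remains to substitute, case by case, the values of $A_\phi$ and $B_\phi$ read off from Lemma \ref{yk} and to simplify the $\sigma_k$-coefficient: case (a) uses Lemma \ref{relations}(c) in the form $q-\beta p_0=e^{\sigma}p_{-1}L^2$; cases (b) and (e) use Lemma \ref{relations}(d) in the form $p_{-1}\beta+p_{-2}L^2=0$; cases (c) and (d) require no further identity. The five formulas of the lemma then drop out. I expect the only genuine difficulty to be the careful bookkeeping of the explicit powers of $e^{\sigma}$ (equivalently of $L^{-1}$) in the definitions --- i.e.\ the clean separation of the $x$-dependence of $\phi$ that passes through $\widetilde{L}$ and $\beta$ from the part that passes through the conformal factor, which is exactly what the exponents $c_\phi$ encode --- together with the repeated, but routine, use of $b_r=m_r+(\beta/L^2)y_r$ and of the homogeneity (Euler) identities of Lemmas \ref{f} and \ref{relations}.
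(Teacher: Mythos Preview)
Your proposal is correct and aligns with the paper's indicated approach: the paper does not write out a proof but merely states that Lemmas \ref{f}--\ref{sh1.2} ``can be proved by making use of Euler theorem of homogenous functions and the homogeneity properties of $p,\,p_0,\,p_{-1},\,p_{-2};\ q,\,q_0,\,q_{-1},\,q_{-2}$,'' which is exactly what your chain-rule computation via the intermediate variables $\sigma,\widetilde{L},\beta$ implements. Your master formula $\partial_k\phi=A_\phi N^r_km_r+B_\phi N^r_kl_r+A_\phi\,b_{0\mid k}+(LB_\phi-\beta A_\phi+c_\phi\phi)\sigma_k$, with the exponents $c_\phi$ read off from $\phi=e^{c_\phi\sigma}g_\phi(\widetilde L,\beta)$, is a clean unifying device that the paper does not make explicit; the case-by-case verifications of the $\sigma_k$-coefficient via Lemma \ref{relations}(c),(d) check out.
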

Now, using Lemma \ref{f}, we get
\begin{prop}\label{h-g} Under a generalized $\beta$-conformal change,
we have:\vspace{-0.2cm}
\begin{description}
    \item[(a)]
 ${\quad}\overline{l}_i=e^\sigma f_1 l_i+f_2 b_i$,

    \item[(b)]
${\quad}\overline{h}_{ij}=e^\sigma p \,h_{ij}+q_0 m_i m_j$,

    \item[(c)]
        ${\quad}\overline{g}_{ij}=e^\sigma p\,g_{ij}+p_0\,b_ib_j
        +e^\sigma p_{-1}(b_iy_j+b_jy_i)+
        e^\sigma p_{-2}\,y_i\,y_j.$
\end{description}
\end{prop}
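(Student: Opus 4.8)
The three formulas are all obtained by differentiating $\overline{L}=f(\widetilde L,\beta)$ (with $\widetilde L=e^\sigma L$ and $\beta=b_i(x)y^i$) with respect to $y^i$ and then simplifying by means of Lemmas \ref{f} and \ref{relations}. For (a) one computes directly $\overline{l}_i=\dot{\partial}_i\overline{L}=f_1\dot{\partial}_i\widetilde L+f_2\dot{\partial}_i\beta=e^\sigma f_1\,\dot{\partial}_iL+f_2 b_i=e^\sigma f_1 l_i+f_2 b_i$, using $\dot{\partial}_iL=l_i$ and $\dot{\partial}_i\beta=b_i$. For (b) and (c) the key remark is that $\overline{g}_{ij}=\tfrac12\dot{\partial}_i\dot{\partial}_j\overline{L}^2=\dot{\partial}_i(\overline{L}\,\overline{l}_j)=\overline{l}_i\overline{l}_j+\overline{L}\,\dot{\partial}_i\overline{l}_j$, whence $\overline{h}_{ij}=\overline{g}_{ij}-\overline{l}_i\overline{l}_j=\overline{L}\,\dot{\partial}_i\overline{l}_j$; so once (a) is known, everything reduces to computing $\dot{\partial}_i\overline{l}_j$ and regrouping.

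To compute $\dot{\partial}_i\overline{l}_j$ I would differentiate the formula in (a), using the chain rule once more to get $\dot{\partial}_if_1=e^\sigma f_{11}l_i+f_{12}b_i$ and $\dot{\partial}_if_2=e^\sigma f_{12}l_i+f_{22}b_i$, together with $\dot{\partial}_il_j=l_{ij}=h_{ij}/L$. This presents $\dot{\partial}_i\overline{l}_j$ as a linear combination of $h_{ij}$, $l_il_j$, $b_ib_j$ and $b_il_j+b_jl_i$. Multiplying by $\overline{L}=f$ and invoking the abbreviations $p=ff_1/L$, $q_0=ff_{22}$, $q_{-1}=ff_{12}/L$, the four coefficients of $\overline{h}_{ij}$ become $e^\sigma p$ (for $h_{ij}$, since $e^\sigma ff_1\,l_{ij}=e^\sigma pL\cdot(h_{ij}/L)=e^\sigma p\,h_{ij}$), $q_0$ (for $b_ib_j$), $e^\sigma q_{-1}L$ (for $b_il_j+b_jl_i$) and $e^{2\sigma}ff_{11}$ (for $l_il_j$). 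To reach the stated form I would then expand $q_0 m_im_j$ with $m_i=b_i-(\beta/L)l_i$ and match: the $b_il_j+b_jl_i$ term agrees because $e^\sigma q_{-1}L=-q_0\beta/L$ is exactly Lemma \ref{relations}(a), and the $l_il_j$ term agrees because Lemma \ref{f}(c) gives $e^{2\sigma}ff_{11}=-e^\sigma\beta q_{-1}$, which by Lemma \ref{relations}(a) equals $q_0\beta^2/L^2$. This proves (b).

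For (c) I add back $\overline{l}_i\overline{l}_j=e^{2\sigma}f_1^2\,l_il_j+e^\sigma f_1f_2(b_il_j+b_jl_i)+f_2^2\,b_ib_j$. The $b_ib_j$ coefficient becomes $f_2^2+ff_{22}=p_0$; the $b_il_j+b_jl_i$ coefficient becomes $e^\sigma(f_1f_2+ff_{12})=e^\sigma p_{-1}L$, using $p_{-1}=q_{-1}+pf_2/f$ and $f_1=pL/f$; and the $l_il_j$ coefficient becomes $e^{2\sigma}(f_1^2+ff_{11})$, which equals $e^\sigma(p+p_{-2}L^2)$ by the definitions $p_{-2}=q_{-2}+e^\sigma p^2/f^2$ and $q_{-2}=f(e^\sigma f_{11}-f_1/L)/L^2$. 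Finally I would pass from the $\{l_i,b_i\}$ description to the stated one using $y_i=Ll_i$ (so $l_il_j=y_iy_j/L^2$, $b_il_j=b_iy_j/L$) and $h_{ij}=g_{ij}-y_iy_j/L^2$: the combination $e^\sigma p\,h_{ij}+e^\sigma(p+p_{-2}L^2)l_il_j+e^\sigma p_{-1}L(b_il_j+b_jl_i)+p_0b_ib_j$ then collapses, after the $e^\sigma p\,y_iy_j/L^2$ terms cancel, to $e^\sigma p\,g_{ij}+p_0b_ib_j+e^\sigma p_{-1}(b_iy_j+b_jy_i)+e^\sigma p_{-2}y_iy_j$, which is (c).

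No step presents genuine difficulty once (a) is in hand; the work is bookkeeping. The one point that really needs the auxiliary lemmas — and is the natural place for the argument to go wrong — is the collapse of the $l_il_j$ and $b_il_j$ coefficients in (b) into the single term $q_0 m_im_j$, which rests squarely on Lemma \ref{f}(c) and Lemma \ref{relations}(a); without these the answer would not take the compact angular-metric form. A secondary care point is the change of basis between $\{l_i,b_i\}$ and $\{y_i,b_i\}$ in (c): keeping $y_i=Ll_i$ explicit throughout (rather than juggling $l^i$ and $y^i$) avoids sign and homogeneity slips, and the homogeneity degrees recorded in the subscripts of $p,p_0,p_{-1},p_{-2},q,q_0,q_{-1},q_{-2}$ provide a useful running consistency check.
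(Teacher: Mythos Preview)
Your proof is correct and follows essentially the same approach as the paper: differentiate $\overline L=f(e^\sigma L,\beta)$ by the chain rule, expand the second derivatives of $f$, and identify the coefficients with the abbreviations $p,p_0,p_{-1},p_{-2},q_0,q_{-1}$. The only organizational difference is that the paper illustrates the argument by computing $\overline g_{ij}$ directly (so (b) would follow by subtracting $\overline l_i\overline l_j$), whereas you first isolate $\overline h_{ij}=\overline L\,\dot\partial_i\overline l_j$ to obtain (b) and then add $\overline l_i\overline l_j$ to reach (c); this has the mild advantage of making transparent exactly where Lemma~\ref{f}(c) and Lemma~\ref{relations}(a) enter to produce the compact $q_0m_im_j$ form.
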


\begin{proof} As an illustration, we prove \textbf{(c)} only.
\begin{eqnarray*}
       \overline{g}_{ij}&=&
       \dot{\partial}_j\dot{\partial}_i(\frac{1}{2}\overline{L}^2)
        =\dot{\partial}_i(f(e^\sigma f_1 l_{j}+f_{2} b_j))\\
        &=&(e^\sigma f_1 l_{i}+f_{2} b_i)(e^\sigma f_1 l_{j}+f_{2} b_j)
        +f(e^\sigma f_1 l_{ij}+e^\sigma (e^\sigma f_{11}l_i+f_{12}b_i)l_j)\\
        &&+f(e^\sigma f_{12}l_i+f_{22}b_i)b_j\\
        & =&e^\sigma p \,g_{ij}+e^\sigma[e^{\sigma}(f^2_{1}/L^2)
        + e^{\sigma}(ff_{11}/L^2)-(ff_{1}/L^3)]y_iy_j+(f^2+q_0)b_ib_j\\
        &&+e^\sigma[(f_1f_2/L)+ (ff_{12}/L)(b_iy_j+b_jy_i)]\\
        &=&e^{\sigma}p\,g_{ij}+p_0b_ib_j+e^{\sigma}p_{-1}(b_iy_j+b_jy_i)+e^{\sigma}p_{-2}
        y_iy_j.
\end{eqnarray*}
\end{proof}

Lemma \ref{relations} helps us to compute the inverse metric
$\overline{g}^{ij}$ of the metric
 $\overline{g}_{ij}$.

\begin{prop}\label{inverse} Under a generalized $\beta$-conformal
change,
 the inverse metric $\overline{g}^{ij}$ of the metric
 $\overline{g}_{ij}$ is given by:
    \begin{eqnarray*}
\overline{g}^{ij}
&=&(e^{-\sigma}/p){g}^{ij}-s_0b^ib^j-s_{-1}(y^ib^j+y^jb^i)-s_{-2}y^iy^j,
\end{eqnarray*}
where \vspace{-0.2cm}
 \begin{eqnarray*}
s_0:&=&e^{-\sigma}f^2q_0/(\varepsilon pL^2),
\hspace{.2cm}s_{-1}:=p_{-1}f^2/(p \,\varepsilon L^2),\hspace{.2cm}
s_{-2}:=p_{-1}(e^{\sigma}m^2 p L^2-b^2f^2)/(\varepsilon p \beta
L^2),
\end{eqnarray*}
   $\varepsilon :=f^2(e^\sigma p+m^2 q_0)/L^2\neq 0$,\quad
 $m^2=g^{ij}m_im_j=m^im_i$.
  \end{prop}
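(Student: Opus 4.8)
The plan is to verify directly that the proposed expression for $\overline{g}^{ij}$ satisfies $\overline{g}^{ij}\overline{g}_{jk}=\delta^i_k$, using the formula for $\overline{g}_{ij}$ from Proposition~\ref{h-g}(c). Since $\overline{g}_{ij}$ is built from $g_{ij}$, $b_ib_j$, $b_iy_j+b_jy_i$ and $y_iy_j$, the natural ansatz for the inverse is a linear combination of $g^{ij}$, $b^ib^j$, $y^ib^j+y^jb^i$ and $y^iy^j$ with coefficients to be determined; the statement already supplies these coefficients as $e^{-\sigma}/p$, $-s_0$, $-s_{-1}$, $-s_{-2}$, so strictly it suffices to check the product. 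First I would record the contractions that will appear repeatedly: $g^{ij}y_j=y^i$, $g^{ij}b_j=b^i$, $b^iy_i=\beta$, $y^iy_i=L^2$, $b^ib_i=b^2$, together with the identity $m_i=b_i-(\beta/L^2)y_i$ so that $m^2=b^2-\beta^2/L^2$, which lets one re-express combinations of $b^2$, $\beta$, $L^2$ through $m^2$.

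Next I would multiply $\overline{g}^{ij}$ by $\overline{g}_{jk}$ term by term. The leading term $(e^{-\sigma}/p)g^{ij}\cdot e^\sigma p\, g_{jk}$ gives $\delta^i_k$ immediately; everything else must cancel. Grouping the remainder by its tensorial type, one gets a coefficient multiplying $b^i b_k$, one multiplying $y^i b_k$, one multiplying $b^i y_k$ (these last two combine by symmetry), and one multiplying $y^i y_k$. Setting each of these four scalar coefficients to zero yields the defining relations for $s_0, s_{-1}, s_{-2}$. Here is where Lemma~\ref{relations} does the real work: the cross-terms produce expressions like $p_0\beta+e^\sigma p_{-1}L^2$, $p_{-1}\beta+p_{-2}L^2$, $q\beta+e^\sigma pL^2$, which by parts (c), (d), (e) of that lemma collapse to $q$, $0$, $f^2$ respectively, and part (a) $q_0\beta+e^\sigma q_{-1}L^2=0$ is what makes the $q_0$-terms align. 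One also needs the definition $p_0=f_2^2+q_0$ and $\varepsilon=f^2(e^\sigma p+m^2q_0)/L^2$ to recognize the common denominator; the appearance of $\varepsilon$ is forced precisely because solving the $3\times 3$ linear system for $(s_0,s_{-1},s_{-2})$ produces a determinant proportional to $\varepsilon$, and the hypothesis $\varepsilon\neq 0$ is exactly the nondegeneracy needed for $\overline{g}_{ij}$ to be invertible.

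The main obstacle is purely organizational: keeping the bookkeeping straight among the four tensor types and not losing factors of $e^\sigma$, $L$, $\beta$. I would manage this by introducing the contraction table above once and then reducing every product to that table before collecting coefficients, and by substituting $b^2=m^2+\beta^2/L^2$ at the end to massage $s_{-2}$ into the stated form $p_{-1}(e^\sigma m^2 pL^2-b^2f^2)/(\varepsilon p\beta L^2)$. I expect no conceptual difficulty beyond correctly invoking Lemma~\ref{relations} at the three or four places where a $\beta$-weighted sum must vanish or simplify; once those substitutions are made the identity $\overline{g}^{ij}\overline{g}_{jk}=\delta^i_k$ falls out, and by symmetry of both tensors this also gives $\overline{g}_{kj}\overline{g}^{ji}=\delta^i_k$, completing the proof.
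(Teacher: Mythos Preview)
Your proposal is correct and matches the paper's approach: the paper does not write out a proof but only remarks that ``Lemma~\ref{relations} helps us to compute the inverse metric,'' leaving the verification to the reader. Your plan---making the natural ansatz in $g^{ij}$, $b^ib^j$, $y^ib^j+y^jb^i$, $y^iy^j$, contracting against $\overline{g}_{jk}$ from Proposition~\ref{h-g}(c), and collapsing the cross-terms via Lemma~\ref{relations}(a),(c),(d),(e)---is exactly the computation the paper is pointing to, and your identification of $\varepsilon\neq 0$ as the nondegeneracy condition is on target.
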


  \begin{rem}\label{s0}
The quantities $s_{0}\,,\,s_{-1}$ and $s_{-2}$
 satisfy:
$$\beta s_0+L^2s_{-1}=q/\varepsilon,$$
$$b^2s_{-1}+\beta s_{-2}=e^{\sigma}p_{-1}m^2/\varepsilon.$$
\end{rem}


\begin{prop}\label{c} Under a generalized $\beta$-conformal change, we
have\emph{:}
\begin{description}
    \item[(a)] The Cartan  tensor $\overline{C}_{ijk}$ is
    expressed in terms of $C_{ijk}$ as
    \begin{equation}\label{cc}
    \overline{C}_{ijk}=e^{\sigma} p\,C_{ijk}+V_{ijk},
\end{equation}

    \item[(b)] The (h)hv-torsion tensor $\overline{C}^{l}_{ij}$ is
    expressed in terms of $C^l_{ij}$ as
\begin{equation}\label{sh1-10}
   \overline{C}^l_{ij}=C^l_{ij}+M^l_{ij}\,,
\end{equation}
 where
\vspace{-.5 cm}\begin{eqnarray*}
    V_{ijk}&:=&\frac{e^{\sigma}p_{-1}}{2}(h_{ij}m_k+h_{jk}m_i+h_{ki}m_j)
    +\frac{p_{02}}{2}m_im_jm_k\\
and\qquad\qquad    M^l_{ij}&:=&\frac{1}{2p}[e^{-\sigma}m^l-p\,m^2
    (s_0b^l+s_{-1}y^l)](e^{\sigma}p_{-1}h_{ij}+p_{02}m_im_j)\\
    &&-e^{\sigma}(s_0b^l+s_{-1}y^l)(p\,C_{isj}b^s+p_{-1}m_im_j)\\
   &&+\frac{p_{-1}}{2p}(h^l_im_j+h^l_jm_i);\\
       h^i_j&=&g^{il}h_{lj}.
\end{eqnarray*}
\end{description}
\end{prop}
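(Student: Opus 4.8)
The plan is to derive both identities by direct computation from the formulas for $\overline{g}_{ij}$ and $\overline{g}^{ij}$ already obtained in Propositions \ref{h-g} and \ref{inverse}, together with the derivative rules of Lemma \ref{yk}; no genuinely new idea is needed and the work is purely organisational. For part \textbf{(a)} I would start from $\overline{C}_{ijk}=\tfrac12\dot{\partial}_k\overline{g}_{ij}$ and differentiate the expression $\overline{g}_{ij}=e^\sigma p\,g_{ij}+p_0b_ib_j+e^\sigma p_{-1}(b_iy_j+b_jy_i)+e^\sigma p_{-2}y_iy_j$ with respect to $y^k$, using $\dot{\partial}_kg_{ij}=2C_{ijk}$, $\dot{\partial}_ky_i=g_{ik}$, the fact that $b_i$ is a function of $x$ alone, and the formulas of Lemma \ref{yk}(b),(c),(d),(e) for $\dot{\partial}_kp$, $\dot{\partial}_kp_0$, $\dot{\partial}_kp_{-1}$, $\dot{\partial}_kp_{-2}$. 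The term $2e^\sigma p\,C_{ijk}$ splits off at once, and it remains to check that all remaining terms add up to $2V_{ijk}$.

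To see this I would first collect the terms carrying the factor $p_{02}$: using $b_i=m_i+(\beta/L^2)y_i$ they combine into $p_{02}m_im_jm_k$. Next I would invoke the homogeneity relation $p_{-1}\beta+p_{-2}L^2=0$ of Lemma \ref{relations}(d) to replace $p_{-2}$ by $-(\beta/L^2)p_{-1}$, so that every surviving term carries the factor $e^\sigma p_{-1}$; substituting $g_{ij}=h_{ij}+l_il_j$ and $l_i=y_i/L$ and simplifying, all the pure $y_iy_jy_k$ contributions cancel and the remainder reduces exactly to $e^\sigma p_{-1}(h_{ij}m_k+h_{jk}m_i+h_{ki}m_j)$. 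Hence $\overline{C}_{ijk}=e^\sigma p\,C_{ijk}+V_{ijk}$, which is \eqref{cc}.

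For part \textbf{(b)} I would use $\overline{C}^l_{ij}=\overline{g}^{lr}\overline{C}_{rij}$, insert $\overline{C}_{rij}=e^\sigma p\,C_{rij}+V_{rij}$ from part (a), and insert the formula $\overline{g}^{lr}=(e^{-\sigma}/p)g^{lr}-s_0b^lb^r-s_{-1}(y^lb^r+y^rb^l)-s_{-2}y^ly^r$ from Proposition \ref{inverse}. The product $(e^{-\sigma}/p)g^{lr}\cdot e^\sigma p\,C_{rij}$ is precisely $C^l_{ij}$, giving the leading term of \eqref{sh1-10}; all remaining products form $M^l_{ij}$. To evaluate them I would record the elementary contractions
$$y^rC_{rij}=0,\quad m_ry^r=0,\quad h_{rj}y^r=0,\quad h_{rj}b^r=m_j,\quad b^rm_r=m^2,\quad g^{lr}m_r=m^l,\quad g^{lr}h_{rj}=h^l_j,$$
from which $y^rV_{rij}=0$, $\,b^rV_{rij}=e^\sigma p_{-1}m_im_j+\tfrac12m^2(e^\sigma p_{-1}h_{ij}+p_{02}m_im_j)$, and $g^{lr}V_{rij}=\tfrac12e^\sigma p_{-1}(h^l_im_j+h^l_jm_i+h_{ij}m^l)+\tfrac12p_{02}m^lm_im_j$. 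Every product in which a free $y^r$ is contracted into $C_{rij}$ or $V_{rij}$ then vanishes, and regrouping the survivors — the pieces coming from $(e^{-\sigma}/p)g^{lr}V_{rij}$, from $-(s_0b^l+s_{-1}y^l)b^r(e^\sigma p\,C_{rij}+V_{rij})$, and from $-s_{-1}y^lb^re^\sigma p\,C_{rij}$ — yields exactly the stated expression for $M^l_{ij}$.

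The main obstacle in both parts is bookkeeping rather than ideas: in (a) one must track the terms coming from differentiating the four summands of $\overline{g}_{ij}$ and see that, after using Lemma \ref{relations}(d), they telescope into a totally symmetric expression in $h$ and $m$; in (b) one must expand a four-term inverse against a two-term Cartan tensor and recognise which products survive the $y$-contractions and how the survivors repackage into the compact form of $M^l_{ij}$. The identities $m_ry^r=0$ and $y^rC_{rij}=0$ carry out most of the simplification.
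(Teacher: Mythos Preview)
Your proposal is correct and, for part \textbf{(a)}, follows essentially the same route as the paper: the paper too differentiates the four summands of $\overline{g}_{ij}$ via Lemma \ref{yk}, uses the identity $p_{-1}\beta+p_{-2}L^2=0$ of Lemma \ref{relations}(d) to eliminate $p_{-2}$, and then recognises the totally symmetric combination in $h$ and $m$. The paper omits a proof of part \textbf{(b)}; your approach via $\overline{C}^l_{ij}=\overline{g}^{lr}\overline{C}_{rij}$ with the recorded contractions is the natural one and does yield the stated $M^l_{ij}$ (one small slip: the item ``$-s_{-1}y^lb^r e^\sigma p\,C_{rij}$'' in your list of survivors is already contained in the preceding item $-(s_0b^l+s_{-1}y^l)b^r(e^\sigma p\,C_{rij}+V_{rij})$, so it should not be listed separately).
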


\begin{proof}We prove \textbf{(a)} only.
 Differentiating  $\overline{g}_{ij}$ (Proposition \ref{h-g}) with respect to
  $y^k$, using    Lemma
 \ref{yk},
we have
\begin{eqnarray*}
     2\overline{C}_{ijk}&=&2e^{\sigma}p \,C_{ijk}+
     e^{\sigma}g_{ij}p_{-1}m_k+p_{02}b_ib_jm_k+e^{\sigma}p_{-1}(b_ig_{jk}+b_jg_{ik})\\&&-e^{\sigma}(b_iy_j+b_jy_i)
    .[e^{-\sigma}(\beta/L^2)p_{02}m_k+(p_{-1}/L)l_k]+e^{\sigma}p_{-2}g_{ik}y_j\\&&+e^{\sigma}p_{-2}y_ig_{jk}
    +e^{\sigma}y_iy_j[(e^{-\sigma}(\beta^2/L^4)p_{02}-(p_{-1}/L^2))m_k+(2\beta/L^3)l_k]\\
    &=&2e^{\sigma}p\,C_{ijk}+e^{\sigma}p_{-1}(h_{ij}m_k+h_{jk}b_i+h_{ik}b_j)+p_{02}m_im_jm_k\\
    &&-e^{\sigma}p_{-1}(\beta/L)(h_{ik}l_j+h_{jk}l_i+2l_il_jl_k)+2e^{\sigma}(\beta/L)p_{-1}l_il_jl_k\\
    &=&2e^{\sigma}p\,C_{ijk}+e^{\sigma}p_{-1}(h_{ij}m_k+h_{jk}m_i+h_{ki}m_j)+p_{02}m_im_jm_k.
\end{eqnarray*}
\end{proof}


   The  transformed Christoffel symbols of the Finsler space $\overline{F}^n$  are given
   by$$\overline{\gamma}^i_{jk}=\frac{1}{2}\overline{g}^{ir}(\partial_j\overline{g}_{kr}
   +\partial_k\overline{g}_{rj}-\partial_r\overline{g}_{jk}).$$
In view of Lemma \ref{sh1.2} and the above expression, we get
 \begin{prop}\label{gamma} Under
a generalized $\beta$-conformal change, the Christoffel symbols
$\gamma^i_{jk}$ transform as follows:
\begin{eqnarray}
 \nonumber
   \overline{\gamma}^i_{jk}&=& \gamma^i_{jk}+\overline{g}^{ir}\left[
      F_{rk}Q_j+F_{rj}Q_k+E_{jk}Q_r-
      \Theta_{(j,k,r)}\set{B_{jk}b_{0\mid
      r}+V_{jkt}N^t_r+(1/2)K_{jk}\sigma_{r}}\right] \\
  &&+(g^{im}
      -e^{\sigma}p\overline{g}^{im})\Theta_{(j,k,m)}\{C_{jkr}N^r_m\},
\end{eqnarray}
\end{prop}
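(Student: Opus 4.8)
The plan is to derive the transformation law for $\overline{\gamma}^i_{jk}$ directly from the definition
$$\overline{\gamma}^i_{jk}=\tfrac{1}{2}\overline{g}^{ir}\bigl(\partial_j\overline{g}_{kr}+\partial_k\overline{g}_{rj}-\partial_r\overline{g}_{jk}\bigr)$$
by computing the partial derivatives $\partial_j\overline{g}_{kr}$ from the expression for $\overline{g}_{ij}$ in Proposition \ref{h-g}(c). Differentiating
$$\overline{g}_{kr}=e^{\sigma}p\,g_{kr}+p_0\,b_kb_r+e^{\sigma}p_{-1}(b_ky_r+b_ry_k)+e^{\sigma}p_{-2}\,y_ky_r,$$
with respect to $x^j$ produces four groups of terms: those in which $\partial_j$ hits the coefficient functions $p,p_0,p_{-1},p_{-2}$ (handled by Lemma \ref{sh1.2}), those in which it hits $e^{\sigma}$ (producing $\sigma_j$ factors), those in which it hits $b_k,b_r$ (producing $\partial_jb_k$, which one rewrites using $b_{0\mid k}$ and the Cartan connection), and those in which it hits $g_{kr}$ (producing $\partial_jg_{kr}$, which is packaged into $\gamma^i_{jk}$ together with the $y_ky_r$ and $y$-mixed terms). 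The first step, then, is to carry out this differentiation cleanly, substitute Lemma \ref{sh1.2}, and sort the resulting mess into the cyclic-type combinations $\Theta_{(j,k,r)}\{\cdot\}$ that appear in the statement.

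Next I would form the symmetrized combination $\partial_j\overline{g}_{kr}+\partial_k\overline{g}_{rj}-\partial_r\overline{g}_{jk}$. The key organizing principle is that the part coming from $\partial_jg_{kr}$ reassembles, after multiplying by $\tfrac12\overline{g}^{ir}$, into $\gamma^i_{jk}$ up to a correction: since $\overline{g}^{ir}\neq (e^{-\sigma}/p)g^{ir}$, the difference between using $\overline{g}^{ir}$ and the "expected" conformal factor times $g^{ir}$ generates the last term $(g^{im}-e^{\sigma}p\,\overline{g}^{im})\Theta_{(j,k,m)}\{C_{jkr}N^r_m\}$ — here one uses the classical identity $\gamma^i_{jk}=\Gamma^i_{jk}-g^{it}\Theta_{(j,k,r)}\{C_{jkr}N^r_t\}$ (equivalently the displayed relation in the Notations) together with $\tfrac12 g^{ir}(\delta_j g_{kr}+\delta_k g_{rj}-\delta_r g_{jk})=\Gamma^i_{jk}$, so that replacing $\delta$ by $\partial$ reintroduces exactly the $C_{jkr}N^r_{\bullet}$ terms. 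The remaining terms — those involving $p_{-1}$, $p_{02}$, $\sigma_r$, $b_{0\mid r}$, and the Cartan tensor $C_{jkr}$ — are collected and abbreviated by introducing the auxiliary tensors $F_{rk}$, $Q_j$, $E_{jk}$, $B_{jk}$, $K_{jk}$ (which I expect to be defined just before or after this Proposition in the paper, built from $p_{-1},p_{02},h_{ij},m_i,y_i$ and the $V_{jkt}$ of Proposition \ref{c}).

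The main obstacle will be the bookkeeping: there are a large number of terms, and the art lies in recognizing which combinations are cyclic-antisymmetric in $(j,k,r)$ (hence fit under $\Theta_{(j,k,r)}$) versus which are symmetric in two indices (hence fit the $F_{rk}Q_j+F_{rj}Q_k+E_{jk}Q_r$ pattern). In particular, the terms quadratic in $m_i$ coming from $p_{02}$ and the terms linear in $h_{ij}$ coming from $p_{-1}$ must be teased apart carefully, and one must repeatedly use the homogeneity identities of Lemmas \ref{f}, \ref{relations}, \ref{yk}, together with $m_iy^i=0$ (which follows from $m_i=b_i-(\beta/L^2)y_i$ and $y_iy^i=L^2$) and $h_{ij}y^j=0$, to cancel or merge terms. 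I do not expect any conceptual difficulty beyond this; the result is essentially forced once the differentiation is done and the definitions of the auxiliary symbols are in place, so I would present the proof as: differentiate $\overline{g}_{kr}$, substitute Lemma \ref{sh1.2}, symmetrize, extract $\gamma^i_{jk}$ plus the curvature-type correction via the $\Gamma$–$\gamma$ relation, and collect the residual terms into the stated compact form.
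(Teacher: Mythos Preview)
Your plan is correct and coincides with the paper's own approach: the paper computes $\partial_j\overline{g}_{kr}$ from Proposition \ref{h-g}(c) using Lemma \ref{sh1.2}, writes it as $e^\sigma p\,\partial_jg_{kr}+2N^l_jV_{krl}+2B_{kr}b_{0\mid j}+Q_rb_{k\mid j}+Q_kb_{r\mid j}+K_{kr}\sigma_j$ plus terms of the form $(\overline{g}_{rl}-e^\sigma p\,g_{rl})\Gamma^l_{kj}$ (coming from $\partial_j b_k=b_{k\mid j}+b_l\Gamma^l_{kj}$ and the analogous identity for $y_k$), and then obtains the result from the definition of $\overline{\gamma}^i_{jk}$. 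Your identification of the mechanism producing the final $(g^{im}-e^\sigma p\,\overline{g}^{im})\Theta_{(j,k,m)}\{C_{jkr}N^r_m\}$ term is exactly right, and your bookkeeping strategy (separate the $b_{k\mid j}$ pieces into $E$ and $F$, collect the $\sigma_j$ pieces into $K$, the $b_{0\mid j}$ pieces into $B$, the $N^t_j$ pieces into $V$) matches how the auxiliary tensors are defined immediately after the Proposition.
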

\hspace{-0.7cm}where
\begin{eqnarray*}
   2E_{ij} &=& b_{i\mid j}+b_{j\mid i},\,\,2F_{ij}=b_{i\mid j}-b_{j\mid i}, \\
    2B_{ij}&=&e^\sigma p_{-1}h_{ij}+p_{02}m_im_j,  \\
    Q_i&=& e^\sigma p_{-1}y_i+p_0 b_i,\\
      K_{ij}&=&A_1g_{ij}+A_2b_ib_j+A_3(b_iy_j+b_jy_i)+A_4y_iy_j,\\
     A_1 &=&e^\sigma(2p-\beta p_{-1}), A_2=-\beta p_{02}, A_3=e^\sigma
p_{-1}+(\beta^2/L^2)p_{02}\,,\,\,A_4=e^\sigma
p_{-2}-(\beta^3/L^4)p_{02},
\end{eqnarray*}

\begin{proof}  After long but easy calculations, using Lemma \ref{sh1.2}, one can show that
\begin{eqnarray*}
     \partial_j\overline{g}_{kr}
     &=&e^\sigma p\,
 \partial_jg_{kr}+2N^l_jV_{krl}+2B_{kr}b_{0\mid j}+Q_rb_{k\mid j}+Q_kb_{r\mid
 j}+p_0(b_rb_l\Gamma^l_{kj}+b_lb_k\Gamma^l_{rj})\\
&+&e^\sigma
p_{-1}((b_ry_l+b_ly_r)\Gamma^l_{kj}+(b_ly_k+b_ky_l)\Gamma^l_{rj})+e^\sigma
p_{-2}(y_ry_l\Gamma^l_{kj}+y_ky_l\Gamma^l_{rj}) +K_{kr}\sigma_j.
\end{eqnarray*}
 The result follows from the above formula and the definition of $\overline{\gamma}^i_{jk}$.
 \end{proof}
Propositions \ref{h-g}, \ref{inverse}, \ref{c} and \ref{gamma}
constitute the main elementary entities, or  building blocks, of
the geometry of the transformed space $\overline{F}^n$. As a
result, we are now in a position to construct the fundamental
geometric objects of such geometry.

 Firstly,  the following result
determines the change of the canonical spray and Cartan nonlinear
connection under a generalized $\beta$-conformal change.
\newpage
\begin{thm}\label{di}
Under a generalized $\beta$-conformal change, we have:
\begin{description}
\item[(a)] The change of the canonical spray $G^i$ is given by
$$\overline{G}^i=G^i+D^i,$$
 where
\begin{eqnarray}
 \label{eqn.2}  \nonumber D^i&=&\frac{\sigma_0}{2p}\{[2p-\beta
p_{-1}-e^{\sigma}p^2L^2s_{-2} -ps_{-1}(2 e^{\sigma}
p\beta+e^{\sigma}p_{-1}L^2m^2)]y^i -2e^\sigma p^2\beta s_0b^i\}\\
 &&+ \frac{q}{p} e^{-\sigma} F^i_0-\frac{1}{2}L^2\sigma^i+\frac{1}{2}(e^{\sigma}pE_{00}
  -2qF_{\beta0}+e^{\sigma}pL^2\sigma_\beta)(s_0b^i+s_{-1}y^i);\\
\nonumber  F^i_0&=&F_{jk}y^kg^{ij},\, F_{\beta 0}=F_{r0}b^r,\,
\sigma_\beta=\sigma_rb^r.
\end{eqnarray}
 \item[(b)]The change of the Cartan nonlinear  connection $N^i_j$
 is given by

$$\overline{N}^i_j=N^i_j+D^i_j, $$
where
\begin{eqnarray}
\label{eqn.3}
\nonumber D^i_j&=&\frac{e^{-\sigma}}{p}A^i_j-(s_0b^i+s_{-1}y^i)A_{rj}b^r\\
 &&-(qb_{0\mid j}+e^\sigma p L^2\sigma_j)(s_{-1}b^i+s_{-2}y^i);\\
\nonumber A_{ij}&:=&E_{00}B_{ij}+F_{i0}Q_j+qF_{ij}+E_{j0}Q_i-2(e^\sigma p\,C_{sij}+V_{sij})D^s\\
  \nonumber    &&+\frac{1}{2}\sigma_0[2e^\sigma pg_{ij}+2e^\sigma p_{-1}m_jy_i-2\beta
     B_{ij}+e^{\sigma}p_{-1}(b_iy_j-b_jy_i)]\\
 \nonumber     &&-\frac{1}{2}\sigma_i(e^\sigma L^2p_{-1}m_j+2e^\sigma py_j)
    +\frac{1}{2}\sigma_j(2e^\sigma py_i+e^{\sigma}L^2 p_{-1}m_i),\\
   \nonumber A^i_j&=&g^{li}A_{lj}.
\end{eqnarray}
\end{description}
\end{thm}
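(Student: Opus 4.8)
The plan is to derive both formulas by differentiating the already-established metric relations and then contracting appropriately, exactly mirroring the standard derivation of the canonical spray from the Christoffel symbols. For part (a), I would start from the classical identity $G^i=\tfrac12\gamma^i_{jk}y^jy^k$, so that $\overline{G}^i=\tfrac12\overline{\gamma}^i_{jk}y^jy^k$, and substitute the expression for $\overline{\gamma}^i_{jk}$ from Proposition \ref{gamma}. The bulk of the work is to transvect that formula twice with $y$: terms like $F_{rk}Q_jy^jy^k$ collapse to $F_{r0}Q_0$, and by homogeneity $Q_0=e^\sigma p_{-1}L^2+p_0\beta$, which by Lemma \ref{relations}(c) equals $q-e^\sigma p_{-1}L^2+\cdots$ — here I would carefully invoke Lemmas \ref{f} and \ref{relations} to simplify the scalar coefficients and the identities in Remark \ref{s0} to handle the $s_0,s_{-1},s_{-2}$ combinations. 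The $\Theta_{(j,k,r)}$ terms, when transvected with $y^jy^k$, must be treated with the definition $\Theta_{(j,k,r)}\{A_{jkr}\}=A_{jkr}-A_{krj}-A_{rjk}$; the $C_{jkr}N^r_m y^jy^k$ piece vanishes after transvection since $C_{jkr}y^j=0$, which kills the last line of Proposition \ref{gamma} entirely and is the reason $D^i$ has the comparatively clean form stated. What remains is to reorganize the surviving terms into the coefficient of $y^i$ and the coefficient of $b^i$ (using $\overline{g}^{ir}$ from Proposition \ref{inverse} and again Remark \ref{s0} to eliminate $g^{ir}$-contractions against $y_r$ and $b_r$), and to identify $F^i_0=g^{ij}F_{jk}y^k$ and the scalars $F_{\beta0}$, $\sigma_\beta$, $E_{00}$, $\sigma_0$.

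For part (b), the Cartan nonlinear connection is $\overline{N}^i_j=\dot\partial_j\overline{G}^i$, so the natural route is to differentiate the formula $\overline{G}^i=G^i+D^i$ from part (a) with respect to $y^j$. Since $N^i_j=\dot\partial_jG^i$, it suffices to compute $D^i_j:=\dot\partial_jD^i$ and show it equals the stated expression. This requires differentiating each of the scalar functions $p,p_0,p_{-1},p_{-2},q,s_0,s_{-1},s_{-2}$ appearing in $D^i$; the $\dot\partial$-derivatives of $p,p_0,p_{-1},p_{-2},q$ are supplied directly by Lemma \ref{yk}, and those of $s_0,s_{-1},s_{-2}$ follow by the quotient rule together with Lemma \ref{yk} and the homogeneity bookkeeping. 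One also needs $\dot\partial_j y^i=\delta^i_j$, $\dot\partial_j L=l_j$, $\dot\partial_j\beta=b_j$, $\dot\partial_j y_i=g_{ij}$, and $\dot\partial_j b^i=-2C^i_{jr}b^r$ (from $\dot\partial_j g^{ir}=-2g^{is}g^{rt}C_{stj}$). The abbreviation $A_{ij}$ is, in effect, defined so that $A^i_j=g^{li}A_{lj}$ packages the horizontal-type contributions that emerge from differentiating the $F^i_0$, $E_{00}$, $F_{\beta0}$ and $\sigma$-linear terms of $D^i$; I would verify that the definition given for $A_{ij}$ indeed reproduces $\dot\partial_j$ applied to those terms, the $-2(e^\sigma p\,C_{sij}+V_{sij})D^s$ piece coming precisely from $\dot\partial_j$ hitting the $\overline{g}^{im}$-type contractions and invoking $\overline{C}_{ijk}=e^\sigma p\,C_{ijk}+V_{ijk}$ from Proposition \ref{c}(a).

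The main obstacle is purely organizational rather than conceptual: in both parts one faces a large number of scalar terms in $p,q,p_0,p_{-1},p_{-2},p_{02},s_0,s_{-1},s_{-2},\beta,L^2,m^2,\sigma_0$ that must be collapsed using Lemmas \ref{f}, \ref{relations} and Remark \ref{s0}, and it is easy to mismanage the three-index cyclic/antisymmetrizing operator $\Theta$ or to drop an $e^{\pm\sigma}$ factor. I would keep the computation honest by tracking the homogeneity degree of every term (the subscript convention on $p,q$ is precisely designed for this: $D^i$ must be homogeneous of degree $2$ and $D^i_j$ of degree $1$ in $y$), which provides a strong consistency check at each stage and, at the end, confirms that the coefficients of $y^i$, $b^i$ (resp. $y^i$, $b^i$ together with $A^i_j$) have been assembled correctly.
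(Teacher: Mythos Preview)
Your proposal is correct and follows essentially the same route as the paper: for (a) transvect the formula of Proposition~\ref{gamma} with $y^jy^k$ (the $C_{jkr}$-terms indeed vanish), and for (b) set $D^i_j:=\dot\partial_j D^i$. The one organizational point worth noting is that the paper differentiates the \emph{intermediate} expression
\[
D^i=\tfrac12\,\overline{g}^{ir}\bigl[2qF_{r0}+E_{00}Q_r-e^\sigma pL^2\sigma_r+\sigma_0(2e^\sigma py_r+e^\sigma L^2p_{-1}m_r)\bigr]
\]
rather than the fully expanded final form; this way $\dot\partial_j\overline{g}^{ir}=-2\overline{g}^{is}\overline{g}^{rt}\overline{C}_{stj}$ immediately yields the $-2(e^\sigma p\,C_{sij}+V_{sij})D^s$ term via Proposition~\ref{c}(a), and you never need to compute $\dot\partial_j s_0,\dot\partial_j s_{-1},\dot\partial_j s_{-2}$ separately. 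You already half-anticipate this in your last remark about $\overline{g}^{im}$-contractions; committing to it from the outset will save you most of the ``purely organizational'' pain you flag.
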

\begin{proof}~\par

    \textbf{(a)} Using proposition \ref{gamma} and the expression
    $\overline{G}^i=\frac{1}{2}\overline{\gamma}^i_{jk}y^jy^k$,  we
get
\begin{eqnarray}
\label{salah}
\overline{G}^i&=&G^i+\frac{1}{2}\overline{g}^{ir}[2qF_{r0}+E_{00}Q_r-
e^\sigma pL^2\sigma_r+\sigma_0(2e^\sigma
py_r+e^\sigma L^2 p_{-1}m_{r})]\\
   \nonumber &=&G^i+\frac{q}{p}  e^{-\sigma} F^i_0-\frac{1}{2}L^2\sigma^i+\frac{1}{2}(e^{\sigma}pE_{00}
   -2qF_{\beta0}+e^{\sigma}pL^2\sigma_\beta)(s_0b^i+s_{-1}y^i)\\
   \nonumber &&-\frac{\sigma_0}{2p}\{2e^\sigma p^2\beta s_0b^i-[2p-
p_{-1}\beta-e^{\sigma}p^2L^2s_{-2} -p s_{-1}(2 e^{\sigma}
p\beta+e^{\sigma}p_{-1}L^2m^2)]y^i\}.
\end{eqnarray}

    \textbf{(b)} Differentiating $D^i$ with respect to  $y^j$, we have
\begin{eqnarray*}
   D^i_j&:=& \dot{\partial}_jD^i=\frac{1}{2}\dot{\partial}_j\{\overline{g}^{ir}
   [2qF_{r0}+E_{00}Q_r-e^\sigma pL^2\sigma_r+\sigma_0(2e^\sigma
py_r+e^\sigma L^2p_{-1}m_{r})]\}\\
   &=&\overline{g}^{ir}\{E_{00}B_{rj}+F_{r0}Q_j
    +qF_{rj}+E_{j0}Q_r-2(e^\sigma p\,C_{srj}+V_{srj})D^s \\
   &&+\frac{1}{2}\sigma_0(2e^\sigma pg_{jr}+2e^\sigma p_{-1}m_jy_r-2\beta
     B_{jr}+e^\sigma p_{-1}(b_ry_j-b_jy_r))\\
     &&-\frac{1}{2}\sigma_r(e^\sigma L^2p_{-1}m_j+2e^\sigma pLl_j)+\frac{1}{2}\sigma_j(2e^\sigma py_r
     +e^\sigma L^2p_{-1}m_{r})\}\\
     &=&\frac{e^{-\sigma}}{p}A^i_j-(s_0b^i+s_{-1}y^i)A_{rj}b^r-(qb_{0\mid
     j}+e^\sigma pL^2\sigma_j)(s_{-1}b^i+s_{-2}y^i).
\end{eqnarray*}
This ends the proof.
\end{proof}

As a direct consequence of the above theorem, the coefficients of
the Berwald  connection $B\overline{\Gamma}$ of the transformed
Finsler space $\overline{F}^n$ can be computed as follows.

\begin{thm}\label{new}Under a generalized  $\beta$-conformal
change,
the coefficients of the Berwald connection $\overline{G}^i_{jk}$ are
given by
     $$\overline{G}^i_{jk}
       =G^i_{jk}+B^i_{jk},$$
where $B^i_{jk}:=\dot{\partial}_kD^i_j$.
\end{thm}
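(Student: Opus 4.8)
The plan is to read off the Berwald coefficients directly from the definition $\overline{G}^i_{jk} = \dot{\partial}_k\overline{N}^i_j$ and to reduce everything to the change of the nonlinear connection already obtained in Theorem \ref{di}. First I would recall from the Notations that, for any Finsler structure, the Berwald connection coefficients are nothing but the vertical derivatives of the associated nonlinear connection: $G^i_{jk}=\dot{\partial}_k N^i_j=\dot{\partial}_k\dot{\partial}_j G^i$, and likewise $\overline{G}^i_{jk}=\dot{\partial}_k\overline{N}^i_j$ for the transformed space $\overline{F}^n$.

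Next I would invoke Theorem \ref{di}(b), which gives $\overline{N}^i_j=N^i_j+D^i_j$ with $D^i_j$ the explicit tensor field displayed there. Applying $\dot{\partial}_k$ to both sides and using linearity of the vertical derivative yields $\overline{G}^i_{jk}=\dot{\partial}_k N^i_j+\dot{\partial}_k D^i_j=G^i_{jk}+\dot{\partial}_k D^i_j$, which is precisely the claimed relation with $B^i_{jk}:=\dot{\partial}_k D^i_j$. So the statement is an immediate corollary of the preceding theorem together with the definition of the Berwald connection.

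If a closed-form expression for $B^i_{jk}$ were desired, one would differentiate the formula for $D^i_j$ term by term, using Lemma \ref{yk} for the vertical derivatives of $p,\,p_0,\,p_{-1},\,p_{-2}$, together with those of $b^i$, $m_i$, $s_0,\,s_{-1},\,s_{-2}$, of $A_{ij}$ and $A^i_j$, and of $D^s$ itself (via Theorem \ref{di}(a)); this is long but routine and, since no such explicit form is needed in the sequel, leaving $B^i_{jk}$ in the compact shape $\dot{\partial}_k D^i_j$ is both sufficient and cleaner. There is essentially no obstacle: the only input beyond the definitions is Theorem \ref{di}(b), so the genuinely laborious part — the computation of $D^i$ and $D^i_j$ — has already been carried out.
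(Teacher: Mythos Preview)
Your argument is correct and matches the paper's own approach: the paper presents Theorem \ref{new} explicitly as ``a direct consequence of the above theorem'' (Theorem \ref{di}), with no further proof, i.e., exactly by applying $\dot{\partial}_k$ to $\overline{N}^i_j=N^i_j+D^i_j$ and using the definition $G^i_{jk}=\dot{\partial}_k N^i_j$.
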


Now, we are  in a position to announce one of the main results of
the present paper. Namely,

\begin{thm}\label{gammA_2}Under a generalized $\beta$-conformal change,
 the coefficients of the Cartan
connection $\overline{\Gamma}^i_{jk}$ are given by

 $$\overline{\Gamma}^i_{jk}
       =\Gamma^i_{jk}+D^i_{jk},$$

\end{thm}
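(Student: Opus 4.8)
The plan is to push everything through the identity recorded in the Notations section,
$$\Gamma^i_{jk}=\gamma^i_{jk}+g^{im}\,\Theta_{(j,k,m)}\{C_{jkr}N^r_m\},$$
together with its $\overline{F}^n$-analogue $\overline{\Gamma}^i_{jk}=\overline{\gamma}^i_{jk}+\overline{g}^{im}\,\Theta_{(j,k,m)}\{\overline{C}_{jkr}\overline{N}^r_m\}$, and then to feed into the second identity the four building blocks already in hand: Proposition \ref{gamma} for $\overline{\gamma}^i_{jk}$, Proposition \ref{c}\,\textbf{(a)} in the form $\overline{C}_{jkr}=e^{\sigma}p\,C_{jkr}+V_{jkr}$, Theorem \ref{di}\,\textbf{(b)} in the form $\overline{N}^r_m=N^r_m+D^r_m$, and Proposition \ref{inverse} for $\overline{g}^{im}$.

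First I would isolate, inside Proposition \ref{gamma}, the last summand $(g^{im}-e^{\sigma}p\,\overline{g}^{im})\,\Theta_{(j,k,m)}\{C_{jkr}N^r_m\}$: its $g^{im}$-part merges with $\gamma^i_{jk}$ to reconstitute $\Gamma^i_{jk}$, leaving a residual $-e^{\sigma}p\,\overline{g}^{im}\,\Theta_{(j,k,m)}\{C_{jkr}N^r_m\}$. Next I would expand $\overline{C}_{jkr}\overline{N}^r_m=(e^{\sigma}p\,C_{jkr}+V_{jkr})(N^r_m+D^r_m)$; since $e^{\sigma}p$ carries no free index it passes through the alternation $\Theta_{(j,k,m)}$, so the leading term of $\overline{g}^{im}\,\Theta_{(j,k,m)}\{\overline{C}_{jkr}\overline{N}^r_m\}$ is $e^{\sigma}p\,\overline{g}^{im}\,\Theta_{(j,k,m)}\{C_{jkr}N^r_m\}$, which cancels the residual exactly. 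Moreover the term $\Theta_{(j,k,m)}\{V_{jkr}N^r_m\}$ thrown up by this expansion annihilates the matching term $-\Theta_{(j,k,m)}\{V_{jkr}N^r_m\}$ hidden (after a dummy-index relabelling) in Proposition \ref{gamma}. What survives is a purely tensorial remainder, and declaring it to be $D^i_{jk}$ yields
$$D^i_{jk}=\overline{g}^{im}\Big[F_{mk}Q_j+F_{mj}Q_k+E_{jk}Q_m-\Theta_{(j,k,m)}\{B_{jk}b_{0\mid m}+\tfrac12 K_{jk}\sigma_m\}+\Theta_{(j,k,m)}\{(e^{\sigma}p\,C_{jkr}+V_{jkr})D^r_m\}\Big],$$
which one may expand further, if desired, by inserting Proposition \ref{inverse} for $\overline{g}^{im}$ and Theorem \ref{di}\,\textbf{(b)} for $D^r_m$. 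That $D^i_{jk}$ is a tensor of type $(1,2)$ is then automatic, being the difference of two Finsler-connection $h$-coefficients, and it is also visible term by term since $\overline{g}^{im}$, $E_{jk}$, $F_{jk}$, $Q_i$, $B_{jk}$, $K_{jk}$, $V_{jkr}$, $C_{jkr}$, $D^r_m$ are all tensors whose $\Theta$-alternations remain tensors.

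I expect the only genuine obstacle to be bookkeeping rather than ideas: organizing the expansion of the triple product $\overline{g}^{im}\,\overline{C}_{jkr}\,\overline{N}^r_m$ so that one sees cleanly that the single piece $g^{im}C_{jkr}N^r_m$ is exactly what is needed to turn the leading part of $\overline{\gamma}^i_{jk}$ into $\Gamma^i_{jk}$, with everything else either cancelling or dropping into the tensorial remainder. The exactness of the two cancellations — the $e^{\sigma}p$-cancellation and the $\Theta\{VN\}$-cancellation — ultimately traces back to the homogeneity identities for $p,\,p_0,\,p_{-1},\,p_{-2},\,q,\,q_0,\,q_{-1},\,q_{-2}$ collected in Lemmas \ref{f}--\ref{yk}, the same identities that forced Propositions \ref{c} and \ref{gamma} into their stated shapes.
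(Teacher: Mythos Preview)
Your proposal is correct and follows essentially the same route as the paper's own proof: start from $\overline{\Gamma}^i_{jk}=\overline{\gamma}^i_{jk}+\overline{g}^{im}\Theta_{(j,k,m)}\{\overline{C}_{jkr}\overline{N}^r_m\}$, substitute Propositions \ref{inverse}, \ref{c}, \ref{gamma} and Theorem \ref{di}, and observe precisely the two cancellations you name (the $e^{\sigma}p\,\overline{g}^{im}\Theta\{CN\}$ cancellation and the $\Theta\{VN\}$ cancellation) to leave $\Gamma^i_{jk}$ plus the tensorial remainder $D^i_{jk}$. Your closed expression for $D^i_{jk}$ matches the paper's (\ref{eqn.4}) once $\overline{g}^{im}$ is expanded via Proposition \ref{inverse}.
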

 {\hspace{-.7cm}}where
 {\vspace{-.3cm}}\begin{eqnarray}
   \nonumber   D^i_{jk}&:=&[(e^{-\sigma}/p){g}^{ir}-(s_0b^i+s_{-1}y^i)b^r-(s_{-1}b^i+s_{-2}y^i)y^r][
      F_{rk}Q_j+F_{rj}Q_k+E_{jk}Q_r\\
  \label{eqn.4}    &&+\frac{1}{2}\Theta_{(j,k,r)}\{2e^\sigma p \,C_{jkm}
      D^m_r+2V_{jkm}D^m_r- K_{jk}\sigma_{r}-2B_{jk}b_{0\mid
      r}\}].
\end{eqnarray}

\begin{proof}To compute $\overline{\Gamma}^i_{jk}$, we use  Propositions \ref{inverse},  \ref{c},
\ref{gamma} and
 Theorem \ref{di}:
 \begin{eqnarray*}
 \overline{\Gamma}^i_{jk}&=&\overline{\gamma}^i_{jk}+\overline{g}^{im}
     (\overline{C}_{jkr}\overline{N}^r_{m}-
     \overline{C}_{mkr}\overline{N}^r_{j}-\overline{C}_{jmr}\overline{N}^r_{k})\\
   &=&  \gamma^i_{jk}+(g^{im}-e^{\sigma}p\overline{g}^{im})(C_{jkr}N^r_m-C_{krm}N^r_j-
      C_{jrm}N^r_k)+\overline{g}^{ir}(B_{jr}b_{0\mid k}\\
      &&+B_{kr}b_{0\mid j}-B_{jk}b_{0\mid r}+F_{rk}Q_j+F_{rj}Q_k+E_{jk}Q_r+
      N^t_jV_{krt}+N^t_kV_{jrt}
      -N^t_rV_{jkt})\\
     &&+\frac{1}{2}\overline{g}^{ir}(\sigma_{k}K_{jr}+\sigma_{j}K_{kr}-\sigma_{r}K_{jk})
     +\overline{g}^{im}\{(e^\sigma p
     \,C_{jkr}+V_{jkr})(N^r_m+D^r_m)\\
     &&-(e^\sigma p\, C_{krm}+V_{krm})(N^r_j+D^r_j)-(e^\sigma p\, C_{jmr}+V_{jmr})(N^r_k+D^r_k)\}\\
   &=&
   \gamma^i_{jk}+g^{im}(C_{jkr}N^r_m-C_{krm}N^r_j-C_{jrm}N^r_k)
   +\overline{g}^{ir}\{B_{jr}b_{0\mid k}+B_{kr}b_{0\mid j}-
      B_{jk}b_{0\mid r}\\
      &&+F_{rk}Q_j+F_{rj}Q_k+E_{jk}Q_r+\frac{1}{2}(\sigma_{k}K_{jr}
      +\sigma_{j}K_{kr}-\sigma_{r}K_{jk})\\
      &&+e^\sigma p \,C_{jkm}
      D^m_r+V_{jkm}D^m_r-e^\sigma p\,C_{rkm}D^m_j-V_{rkm}D^m_j-e^\sigma
      p\,C_{rjm}D^m_k-V_{rjm}D^m_k\}\\
      &=&\Gamma^i_{jk}+[(e^{-\sigma}/p){g}^{ir}-(s_0b^i+s_{-1}y^i)b^r
      -(s_{-1}b^i+s_{-2}y^i)y^r][
      F_{rk}Q_j+F_{rj}Q_k+E_{jk}Q_r\\
      &&+ \frac{1}{2}\Theta_{(j,k,r)}\{2e^\sigma p \,C_{jkm}
      D^m_r+2V_{jkm}D^m_r-K_{jk}\sigma_{r}-2B_{jk}b_{0\mid
      r}\}].
\end{eqnarray*}
 This completes the proof.
\end{proof}
\begin{cor} The tensor $D^i_{jk}$  has
the  properties:
\begin{equation}\label{sh1-19}
D^i_{j0}=B^i_{j0}=D^i_j,\qquad D^i_{00}=2D^i.
\end{equation}
\end{cor}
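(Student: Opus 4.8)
The plan is to bypass the explicit expression for $D^i_{jk}$ in \eqref{eqn.4} altogether and instead exploit the structural identities tying together the Cartan connection, the Berwald connection and the canonical spray, namely $N^i_j=\Gamma^i_{jh}y^h=G^i_{jh}y^h$ and $2G^i=N^i_jy^j$, all of which hold verbatim for the barred objects since $\overline{F}^n$ is assumed to be a genuine Finsler space.

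First I would transvect the identity $\overline{\Gamma}^i_{jk}=\Gamma^i_{jk}+D^i_{jk}$ of Theorem \ref{gammA_2} with $y^k$. Using $\overline{N}^i_j=\overline{\Gamma}^i_{jk}y^k$ and $N^i_j=\Gamma^i_{jk}y^k$ this gives $\overline{N}^i_j=N^i_j+D^i_{j0}$, and comparing with Theorem \ref{di}\textbf{(b)}, which reads $\overline{N}^i_j=N^i_j+D^i_j$, forces $D^i_{j0}=D^i_j$. Likewise, transvecting $\overline{G}^i_{jk}=G^i_{jk}+B^i_{jk}$ (Theorem \ref{new}) with $y^k$ and using $\overline{N}^i_j=\overline{G}^i_{jk}y^k$, $N^i_j=G^i_{jk}y^k$ gives $\overline{N}^i_j=N^i_j+B^i_{j0}$, whence $B^i_{j0}=D^i_j$; equivalently, since $D^i_j=\overline{N}^i_j-N^i_j$ is positively homogeneous of degree one in $y$, Euler's theorem applied to $B^i_{jk}=\dot{\partial}_kD^i_j$ yields $B^i_{j0}=y^k\dot{\partial}_kD^i_j=D^i_j$ directly.

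Finally, transvecting the relation $D^i_{j0}=D^i_j$ once more with $y^j$ gives $D^i_{00}=y^jD^i_j=y^j(\overline{N}^i_j-N^i_j)=2\overline{G}^i-2G^i$ by the homogeneity identity $2G^i=N^i_jy^j$, and Theorem \ref{di}\textbf{(a)} identifies the right-hand side with $2D^i$. I do not expect any real obstacle here; the only point needing a word of justification is that the Cartan/Berwald/spray structural identities may be applied to the barred connection, which is legitimate precisely because $(M,\overline{L})$ is postulated to carry a Finsler structure.
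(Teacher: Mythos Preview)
Your argument is correct and is precisely the reasoning the paper has in mind: the Corollary is stated without proof, as an immediate consequence of the structural identities $N^i_j=\Gamma^i_{jh}y^h=G^i_{jh}y^h$ and $2G^i=N^i_jy^j$ applied to both $F^n$ and $\overline{F}^n$, together with Theorems \ref{di}, \ref{new} and \ref{gammA_2}. There is nothing to add.
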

In what follows we say that $A_i$, for example,  is Cartan-parallel
to mean that $A_i$ is parallel with respect to the \emph{horizontal}
covariant derivative of Cartan connection: $A_{i|j}=0.$ Similarly,
for the other connections  existing in the space.
\begin{thm} \label{abd}
Under a generalized $\beta$-conformal change
$L\rightarrow\overline{L}=f(e^\sigma L,\beta)$, consider the
following two assertions:\begin{description}
    \item[(i)] The covariant vector $b_i$ is Cartan-parallel.

    \item[(ii)]The difference tensor $D^i_{jk}$ vanishes
    identically.

\item[]Then, we have:

    \item[(a)] If \textbf{(i)} and \textbf{(ii)} hold, then $\sigma$ is homothetic.

    \item[(b)] If $\sigma$ is homothetic, then \textbf{(i)} and \textbf{(ii)} are
    equivalent.
\end{description}
\end{thm}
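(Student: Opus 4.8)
The plan is to extract everything from the difference formula \eqref{eqn.4}, using three structural facts. First, by Proposition \ref{inverse} the $n\times n$ matrix forming the first bracket of \eqref{eqn.4} is exactly $\overline{g}^{ir}$, hence invertible, so $D^i_{jk}=0$ if and only if the second bracket vanishes for all $j,k,r$. Second, \eqref{sh1-19} gives $D^i_{j0}=D^i_j$ and $D^i_{00}=2D^i$, so $D^i_{jk}=0$ entails $D^i_j=0$, $D^i=0$, and $D^m_r=0$. Third, writing $b_{i\mid r}=E_{ir}+F_{ir}$, assertion \textbf{(i)} is equivalent to $E_{ir}=F_{ir}=0$. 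Along the way I would use that $e^{\sigma},L,p$ never vanish ($p$ because $\overline{g}^{ij}$ carries the factor $1/p$), that the change genuinely involves $\beta$, i.e.\ $q=ff_2\neq0$, and that ``$\sigma$ homothetic'' means $\sigma_i:=\partial_i\sigma=0$; and I would record the bookkeeping identities obtained from $h_{ir}y^r=0$, $m_ry^r=0$ and Lemma \ref{relations}(c),(d): namely $B_{i0}=0$, $Q_0=Q_ry^r=e^{\sigma}p_{-1}L^2+p_0\beta=q$, and $K_{00}=2e^{\sigma}pL^2$.

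For \textbf{(a)}, assuming \textbf{(i)} and \textbf{(ii)}: \textbf{(ii)} gives $D^m_r=0$ and \textbf{(i)} gives $E_{jk}=F_{rk}=b_{0\mid r}=0$, so in \eqref{eqn.4} every term of the second bracket dies except $-\tfrac12\Theta_{(j,k,r)}\{K_{jk}\sigma_r\}$; invertibility of $\overline{g}^{ir}$ then forces $K_{jk}\sigma_r-K_{kr}\sigma_j-K_{rj}\sigma_k=0$. Transvecting this with $y^r$ and then with $y^j$, and using $B_{i0}=0$, $Q_0=q$, $K_{00}=2e^{\sigma}pL^2$, collapses it to $2e^{\sigma}pL^2\sigma_k=0$, i.e.\ $\sigma_k=0$: $\sigma$ is homothetic.

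For \textbf{(b)}, assume $\sigma$ homothetic. The implication \textbf{(i)}$\Rightarrow$\textbf{(ii)} I would get directly: \textbf{(i)} makes $E=F=b_{0\mid r}=E_{00}=F^i_0=F_{\beta0}=0$, so \eqref{eqn.2} gives $D^i=0$, hence $D^m_r=0$, and then every term of the second bracket in \eqref{eqn.4} (with $\sigma_i=0$) vanishes, so $D^i_{jk}=0$. For \textbf{(ii)}$\Rightarrow$\textbf{(i)}: $D^i_{jk}=0$ gives $D^m_r=0$ as above, and clearing $\overline{g}^{ir}$ in \eqref{eqn.4} (with $\sigma_i=0$) leaves $F_{rk}Q_j+F_{rj}Q_k+E_{jk}Q_r-\Theta_{(j,k,r)}\{B_{jk}b_{0\mid r}\}=0$. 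Transvecting with $y^r$ and then $y^j$ (using $B_{i0}=0$, $Q_0=q$, $b_{0\mid r}y^r=E_{00}$) yields $q\,(E_{0k}+F_{0k})=q\,b_{0\mid k}=0$, so $b_{0\mid k}=0$ --- this is the one place where $q\neq0$ is essential --- and in particular $E_{00}=0$. Putting $b_{0\mid r}=0$ back leaves the algebraic relation
\[
F_{rk}Q_j+F_{rj}Q_k+E_{jk}Q_r=0 .
\]

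I expect the main obstacle to be passing from this relation to $E=F=0$. The key move is to sum it over the three cyclic permutations of $(r,j,k)$: the $F$-terms then cancel in pairs by antisymmetry of $F$, and what survives is the totally symmetric relation $E_{jk}Q_r+E_{kr}Q_j+E_{rj}Q_k=0$. Since $Q_0=q\neq0$, the covector $Q$ is nonzero at each point, and a pointwise linear-algebra argument (pass to a basis in which $Q$ is a coordinate covector and read off components) forces $E=0$; substituting $E=0$ back gives $F_{rk}Q_j+F_{rj}Q_k=0$, whence (set $k=j$, no summation) $F_{rj}Q_j=0$, which forces $F=0$ since $Q\neq0$. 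Thus $b_{i\mid j}=E_{ij}+F_{ij}=0$, which is \textbf{(i)}, and the equivalence follows. Every other step is routine manipulation of the identities collected in the preliminary lemmas of Section 1.
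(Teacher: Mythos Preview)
Your argument is correct and, for parts \textbf{(a)} and the direction \textbf{(i)}$\Rightarrow$\textbf{(ii)} of \textbf{(b)}, essentially coincides with the paper's (you work from \eqref{eqn.4} and the identity $K_{00}=2e^{\sigma}pL^2$, whereas the paper reads the same conclusion off the spray formula \eqref{salah}; these are equivalent).

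For \textbf{(ii)}$\Rightarrow$\textbf{(i)} in \textbf{(b)} you take a genuinely different route. The paper does \emph{not} stay inside \eqref{eqn.4}: it first uses $D^i=0$ in the explicit expression \eqref{eqn.2} to deduce $F^i_0=0$ and $E_{00}=0$, and then exploits the structural fact that $F_{ij}=\tfrac12(\partial_jb_i-\partial_ib_j)$ is a function of $x$ alone, so $F_{ij}=\dot{\partial}_jF_{i0}=0$ follows by vertical differentiation; only after $F_{ij}=0$ and $b_{0\mid r}=0$ are known does the paper return to \eqref{eqn.4} and obtain $E_{jk}Q_r=0$, hence $E_{jk}=0$. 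Your approach never invokes this differentiation trick: you peel off $b_{0\mid k}=0$ by a double transvection of the bracket in \eqref{eqn.4}, then kill the $F$--terms by a cyclic sum and finish with a pointwise linear–algebra argument using $Q\neq0$. Your method is more self-contained (it does not use that $F_{ij}$ depends only on $x$) and slightly more general in spirit; the paper's method is shorter once one notices the special nature of $F_{ij}$.

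One small gap to fix in your final step: from $F_{rk}Q_j+F_{rj}Q_k=0$, setting $k=j$ (no sum) only yields $F_{rj}=0$ for those indices $j$ with $Q_j\neq0$. You need one more line: pick $j_0$ with $Q_{j_0}\neq0$ (possible since $Q_0=q\neq0$), so $F_{rj_0}=0$; then the relation with $j=j_0$ gives $F_{rk}Q_{j_0}=0$ for all $k$, hence $F_{rk}=0$. With this adjustment your proof is complete.
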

\begin{proof}~\par
  \noindent \textbf{(a)} If $D^i_{jk}=0$, then, by (\ref{sh1-19}),
    $D^i=0$ (i.e, $\overline{G}^i=G^i$).
     Moreover,   $b_{j\mid k}=0 $  implies
    that  $ F_{jk}=E_{jk}=0$. Consequently,   (\ref{salah}) reduces to
$$ pL^2\sigma_r-\sigma_0(2 py_r+L^2p_{-1}m_{r})=0.$$
 Now, transvecting  with $y^r$,  we get
$\sigma_0=0$. From which, the above equation implies that
$\sigma_r=0$.  That is, $\sigma$ is homothetic.\\

 \noindent \textbf{(b)} Let   $\sigma$ be homothetic and  $b_{j\mid
    k}=0$. Then, $D^i=0$,  by (\ref{eqn.2}).  Consequently,
    $D^i_{jk}=0$ by (\ref{eqn.4}).

     On the other hand,  let $\sigma$ be homothetic and $D^i_{jk
    }=0$. Then, by (\ref{sh1-19}), $D^i=0$. Hence, (\ref{eqn.2}) reduces to
    \begin{equation}\label{help}
\frac{e^{-\sigma}q}{p}F^i_0+\frac{1}{2}(e^{\sigma}pE_{00}-2qF_{\beta0}
    )(s_0b^i+s_{-1}y^i)=0.
\end{equation}
    Transvecting  (\ref{help}) with $y_i$ and since
    $s_0\beta+s_{-1}L^2\neq 0$   (by Remark \ref{s0}), we get
    \begin{equation}\label{eqn.5}
e^\sigma p E_{00}-2qF_{\beta 0}=0.
\end{equation}  This,
    together with (\ref{help}),
    imply that    $F^i_{0}=0$. Consequently,   $E_{00}=0$,
    by (\ref{eqn.5}). Since
    $F_{ij}=\dot{\partial}_j F_{i0}$ and
    $E_{j0}=\dot{\partial}_j{E_{00}}$,
    then $F_{ij}=0$ and $E_{j0}=0$,
    which leads to $b_{i|0}=b_{0|i}=0$. Consequently,
    $0=D^i_{jk}=\overline{g}^{ir}E_{jk}Q_r$, by (\ref{eqn.4}).
    Hence, $E_{jk}Q_r=0$ and transvecting this with  $y^r$
     gives $E_{jk}=0$. Then, the result follows from the definition of $F_{jk}$ and $E_{jk}$.
\end{proof}

As a consequence of the above theorem, we have the following
interesting special cases.
\begin{thm} \label{cases}~\par
    \begin{description}
        \item[(a)]Let the generalized  $\beta$-conformal change $L\rightarrow\overline{L}=
        f(e^\sigma
        L,\beta)$ be a conformal change ($\beta=0$), then
        $D^i_{jk}$
        vanishes identically if and only if $\sigma$ is
        homothetic.

        \item[(b)]Let the generalized $\beta$-conformal change $L\rightarrow\overline{L}=
        f(e^\sigma
        L,\beta)$ be a $\beta$-change ($\sigma=~0$), then
        $D^i_{ij}$ vanishes identically if and only if $b_i$ is
        Cartan-parallel.
    \end{description}
\end{thm}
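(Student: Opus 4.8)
The plan is to read off both assertions directly from Theorem~\ref{abd}: each of the two special changes automatically satisfies one of the two hypotheses that appear there, so the equivalences asserted in Theorem~\ref{cases} are exactly what remains.

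For \textbf{(a)}, a conformal change is the case $\beta=0$, so $b_i=0$ identically and hence $b_i$ is (trivially) Cartan-parallel; thus assertion \textbf{(i)} of Theorem~\ref{abd} holds in this situation no matter what. For the ``only if'' direction, assume $D^i_{jk}$ vanishes identically, so that \textbf{(i)} and \textbf{(ii)} both hold; then Theorem~\ref{abd}\textbf{(a)} gives that $\sigma$ is homothetic. For the ``if'' direction, assume $\sigma$ is homothetic; then Theorem~\ref{abd}\textbf{(b)} applies, \textbf{(i)} and \textbf{(ii)} become equivalent, and since \textbf{(i)} holds we conclude \textbf{(ii)}, i.e.\ $D^i_{jk}=0$. (Should one prefer to argue without invoking Theorem~\ref{abd}, substituting $b_i=0$ — hence $m_i=0$, $b^i=0$ — into (\ref{eqn.2}) and (\ref{eqn.4}) collapses every $\beta$-type term, leaving $D^i_{jk}$ built purely from $\sigma_r$ through $K_{jk}$; transvecting twice with $y$ exactly as in the proof of Theorem~\ref{abd}\textbf{(a)} then turns $D^i_{jk}=0$ into $\sigma_r=0$.)

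For \textbf{(b)}, a $\beta$-change is the case $\sigma=0$, so $\sigma_r=\partial_r\sigma=0$ and $\sigma$ is trivially homothetic; hence the hypothesis of Theorem~\ref{abd}\textbf{(b)} is met and assertions \textbf{(i)} and \textbf{(ii)} are equivalent, which is precisely the claim: $D^i_{jk}$ vanishes identically if and only if $b_i$ is Cartan-parallel. (Directly: setting $\sigma_r=0$ in (\ref{eqn.2}) and (\ref{eqn.4}) leaves the pure $\beta$-change difference tensor, whose vanishing is governed by $E_{jk}$ and $F_{jk}$, i.e.\ by $b_{i\mid j}$.)

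There is essentially no obstacle here beyond bookkeeping; the one point deserving attention is to confirm that the degenerate limits $\beta=0$ and $\sigma=0$ genuinely lie within the scope of Theorem~\ref{abd} — in particular that $\overline{F}^n$ is still a Finsler space and that the auxiliary quantities $s_0,s_{-1},s_{-2},\varepsilon$ used implicitly there remain well defined (the apparent factor $1/\beta$ in $s_{-2}$ being harmless when $b_i=0$, since its numerator $e^\sigma m^2pL^2-b^2f^2$ then vanishes as well). Once this sanity check is in place, no further computation is required.
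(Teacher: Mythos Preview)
Your proposal is correct and mirrors the paper's own treatment: the paper presents Theorem~\ref{cases} simply as ``a consequence of the above theorem'' with no further argument, and your write-up spells out precisely how each half drops out of Theorem~\ref{abd} once the trivial hypothesis ($b_i=0$, respectively $\sigma_r=0$) is in hand. Your parenthetical remark about the degenerate case $b_i=0$ colliding with the standing assumption $m_i\neq 0$ is a fair observation that the paper glosses over, but since the paper itself asserts part~\textbf{(a)} without comment, your handling is at least as careful as the original.
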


A Finsler space $F^n=(M,L)$ is called a Berwald space if the
Berwald connection coefficients  $G^i_{jk}$ are function of the
positional argument  $x^i$ only. As an immediate consequence of
Theorems \ref{abd} and \ref{cases},
  we have

\begin{thm} \label{berwald}
   Consider a generalized $\beta$-conformal change $L\rightarrow \overline{L}$ having the properties
    that  $b_i(x)$  is Cartan-parallel  and
    $\sigma$   is homothetic. If the original  space  $F^n$ is a Berwald space,
    then so is the transformed space $\overline{F}^n$.
\end{thm}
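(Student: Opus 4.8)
The plan is to reduce the statement to the vanishing of the difference tensor, using the machinery already built, and then to read off the conclusion from the relation $\overline{G}^i_{jk}=G^i_{jk}+B^i_{jk}$ of Theorem \ref{new}.

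First I would note that the two hypotheses — $b_i(x)$ being Cartan-parallel and $\sigma$ being homothetic — are exactly the hypotheses of Theorem \ref{abd}\textbf{(b)} together with its assertion \textbf{(i)}. Hence Theorem \ref{abd}\textbf{(b)} applies and yields assertion \textbf{(ii)}: $D^i_{jk}\equiv 0$. Equivalently, and more elementarily: $\sigma$ homothetic forces $\sigma_i=\sigma_0=\sigma^i=\sigma_\beta=0$, while $b_{i\mid j}=0$ forces $E_{ij}=F_{ij}=0$, and hence $E_{00}=F^i_0=F_{\beta 0}=0$; an inspection of (\ref{eqn.2}) then shows that every summand in $D^i$ carries one of these vanishing factors, so $D^i=0$.

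Next, from $D^i_{jk}=0$ I would transvect with $y^k$ and invoke (\ref{sh1-19}) to get $D^i_j=D^i_{j0}=0$ (or directly $D^i_j=\dot{\partial}_j D^i=0$, starting from $D^i=0$). Then, by the definition $B^i_{jk}:=\dot{\partial}_k D^i_j$ of Theorem \ref{new}, we obtain $B^i_{jk}=0$, whence $\overline{G}^i_{jk}=G^i_{jk}$ identically.

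Finally, if $F^n$ is a Berwald space, its Berwald connection coefficients $G^i_{jk}$ are functions of the positional argument $x^i$ alone; by the identity $\overline{G}^i_{jk}=G^i_{jk}$ just obtained, the same is true of $\overline{G}^i_{jk}$, so $\overline{F}^n$ is a Berwald space. I do not anticipate any genuine obstacle: the argument is essentially a short chain of references to Theorems \ref{abd} and \ref{new} and to (\ref{sh1-19}); the only point deserving a moment's care is confirming that each term of (\ref{eqn.2}) is proportional to a $\sigma$-derivative or to $b_{i\mid j}$, which is immediate once the notation is unwound.
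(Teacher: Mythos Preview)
Your proposal is correct and follows the same route as the paper, which simply declares the result an immediate consequence of Theorems~\ref{abd} and~\ref{cases} without writing out the details. You have merely made explicit the chain the paper leaves implicit: Theorem~\ref{abd}\textbf{(b)} gives $D^i_{jk}=0$ (equivalently $D^i=0$ by inspection of~(\ref{eqn.2})), hence $D^i_j=0$ and $B^i_{jk}=\dot{\partial}_k D^i_j=0$, so Theorem~\ref{new} yields $\overline{G}^i_{jk}=G^i_{jk}$ and the Berwald property transfers.
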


\begin{cor}\label{cor1} Let the Finsler structure $L$ on $F^n$ be
Riemannian. Assume that  $b_i(x)$ is Riemann-parallel and $\sigma$
is homothetic. Then, the transformed space $\overline{F}^n$ is a
Berwald space.
\end{cor}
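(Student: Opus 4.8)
The plan is to deduce this from Theorem \ref{berwald} by verifying that, when $L$ is Riemannian, all three hypotheses of that theorem are met. Recall first that a Riemannian metric has vanishing Cartan tensor, $C_{ijk}=0$, hence $C^i_{jk}=0$. From the identity $\Gamma^i_{jk}=\gamma^i_{jk}+g^{it}(C_{jkr}N^r_{t}-C_{tkr}N^r_{j}-C_{jtr}N^r_{k})$ recorded in the Notations, we then get $\Gamma^i_{jk}=\gamma^i_{jk}$. Moreover $G^h=\frac{1}{2}\gamma^h_{ij}y^iy^j$ with $\gamma^h_{ij}=\gamma^h_{ij}(x)$, so $N^i_j=\gamma^i_{jh}y^h$ and $G^i_{jk}=\dot{\partial}_kN^i_j=\gamma^i_{jk}$, a function of $x$ alone. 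Thus $F^n=(M,L)$ is a Berwald space, which disposes of the last hypothesis of Theorem \ref{berwald}.

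Next I would check that for Riemannian $L$ the notion of being \emph{Cartan-parallel} coincides with that of being \emph{Riemann-parallel}. Since $b_i=b_i(x)$ is independent of the directional argument, $\dot{\partial}_rb_i=0$, whence $\delta_jb_i=\partial_jb_i-N^r_j\dot{\partial}_rb_i=\partial_jb_i$. Using $\Gamma^r_{ij}=\gamma^r_{ij}$ from the previous step, the horizontal covariant derivative of the Cartan connection becomes
$$b_{i\mid j}=\delta_jb_i-b_r\Gamma^r_{ij}=\partial_jb_i-b_r\gamma^r_{ij},$$
which is exactly the Levi-Civita covariant derivative of $b_i$ with respect to the Riemannian metric. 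Hence the assumption that $b_i$ is Riemann-parallel is precisely the assumption that $b_i$ is Cartan-parallel.

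With these two observations in hand the corollary is immediate: $F^n$ is a Berwald space, $b_i(x)$ is Cartan-parallel, and $\sigma$ is homothetic, so Theorem \ref{berwald} applies and yields that the transformed space $\overline{F}^n$ is a Berwald space. I do not anticipate any genuine difficulty; the only step needing a moment's care is the identification of the two parallelism notions, and it rests entirely on $C_{ijk}=0$ together with $b_i$ being a function of $x$ only.
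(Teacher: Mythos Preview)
Your argument is correct and follows exactly the route the paper intends: the corollary is stated immediately after Theorem \ref{berwald} as a direct consequence of it, and you have supplied precisely the two verifications (a Riemannian space is Berwald, and Riemann-parallel coincides with Cartan-parallel when $C_{ijk}=0$) needed to invoke that theorem.
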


It is to be noted  that Theorem \ref{abd}, Theorem   \ref{berwald}
and Corollary \ref{cor1}   generalize some of Shibata's
results \cite{r1.13} and Abed's results \cite{r1.2}.\\

We conclude   this section with the following  result which
determines the  coefficients of the fundamental linear connections
in  Finsler geometry.\\

\begin{thm}\label{final.0} Under the  generalized $\beta$-conformal change (\ref{change}),
\begin{description}
    \item[(a)] the  transformed Cartan connection has the form
    $\overline{C\Gamma}=( \overline{\Gamma}^{h}_{ij}, \overline{N}^{h}_{i}, \overline{C}^{h}_{ij}),$
    \item[(b)]the transformed  Chern connection has the form
    $\overline{R\Gamma}=( \overline{\Gamma}^{h}_{ij}, \overline{N}^{h}_{i}, 0),$
    \item[(c)]the transformed Hashiguchi  connection has the form
    $\overline{H\Gamma}=( \overline{G}^{h}_{ij}, \overline{N}^{h}_{i}, \overline{C}^{h}_{ij}),$
    \item[(d)]the transformed Berwald connection has the form
    $\overline{B\Gamma}=( \overline{G}^{h}_{ij}, \overline{N}^{h}_{i},0),$
\end{description}
where the coefficients $\overline{N}^{h}_{i}$,
$\overline{G}^{h}_{ij}$ and $\overline{\Gamma}^{h}_{ij}$  are
given by   Theorem \ref{di}, Theorem \ref{new}  and Theorem
\ref{gammA_2} respectively, whereas the  components   $
\overline{C}^{h}_{ij}$ are  given by Proposition \ref{c}.
\end{thm}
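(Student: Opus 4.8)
The plan is to assemble each of the four connections directly from its definition, since every ingredient has already been computed in the preceding results. Recall from the Notations that, for \emph{any} Finsler space, the Cartan connection is the triple $(\Gamma^h_{ij},N^h_i,C^h_{ij})$, the Chern (Rund) connection is $(\Gamma^h_{ij},N^h_i,0)$, the Hashiguchi connection is $(G^h_{ij},N^h_i,C^h_{ij})$, and the Berwald connection is $(G^h_{ij},N^h_i,0)$. Applying these definitions verbatim to the transformed space $\overline{F}^n$, the transformed Cartan connection $\overline{C\Gamma}$ is by definition the triple $(\overline{\Gamma}^h_{ij},\overline{N}^h_i,\overline{C}^h_{ij})$, and analogously for $\overline{R\Gamma}$, $\overline{H\Gamma}$ and $\overline{B\Gamma}$.

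First I would quote Theorem \ref{di}\,(b), giving $\overline{N}^h_i=N^h_i+D^h_i$; Theorem \ref{new}, giving $\overline{G}^h_{ij}=G^h_{ij}+B^h_{ij}$ with $B^h_{ij}=\dot{\partial}_kD^h_j$; Theorem \ref{gammA_2}, giving $\overline{\Gamma}^h_{ij}=\Gamma^h_{ij}+D^h_{ij}$; and Proposition \ref{c}\,(b), giving $\overline{C}^h_{ij}=C^h_{ij}+M^h_{ij}$ (equivalently, $\overline{C}^h_{ij}=\overline{g}^{hr}\overline{C}_{rij}$ via Propositions \ref{inverse} and \ref{c}\,(a)). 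Each of these expresses a transformed coefficient family entirely in terms of the corresponding family on $F^n$ plus an explicitly determined correction term. Substituting these four expressions into the four defining triples immediately yields the four asserted forms \textbf{(a)}--\textbf{(d)}.

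There is no genuine obstacle here: the substantive work resides in the earlier theorems, and this statement merely consolidates them. The single point deserving a word of care is internal consistency — namely that the nonlinear connection entering the Cartan--Christoffel expression for $\overline{\Gamma}^h_{ij}$ (through $\overline{\gamma}^i_{jk}+\overline{g}^{im}(\overline{C}_{jkr}\overline{N}^r_m-\overline{C}_{mkr}\overline{N}^r_j-\overline{C}_{jmr}\overline{N}^r_k)$, and implicitly through the frame $\overline{\delta}_i=\partial_i-\overline{N}^r_i\dot{\partial}_r$) coincides with the $\overline{N}^h_i$ displayed as the second slot of each triple. This holds because Theorem \ref{gammA_2} was derived using precisely $\overline{N}^r_m=N^r_m+D^r_m$ from Theorem \ref{di}. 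With that observation the four expressions are mutually compatible, and the proof is complete.
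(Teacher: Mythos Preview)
Your proposal is correct and matches the paper's approach: the paper gives no explicit proof of this theorem, presenting it as a direct consolidation of the preceding results (Theorems \ref{di}, \ref{new}, \ref{gammA_2} and Proposition \ref{c}), which is precisely what you do. The only blemish is a typographical slip --- you write $B^h_{ij}=\dot{\partial}_kD^h_j$ with a dangling index $k$; it should read $B^h_{ij}=\dot{\partial}_jD^h_i$ (or, in the paper's indexing, $B^i_{jk}:=\dot{\partial}_kD^i_j$).
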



\Section{\textbf{Change of the torsion and  curvature tensors}} In
this section, we  consider how the torsion and curvature tensors
transform under  the generalized $\beta$-conformal change
(\ref{change}).

 For an arbitrary Finsler connection
$F\Gamma=(\textbf{F}^i_{jk},\textbf{N}^i_j,\textbf{C}^i_{jk})$ on
the space $F^n$, the  (h)h-,  (h)hv-,  (v)h-,  (v)hv- and
(v)v-torsion tensors of $F\Gamma$ are respectively given
by~\cite{r1.6}:
\begin{eqnarray*}
 \textbf{T}^i_{jk}&=&\textbf{F}^i_{jk}-\textbf{F}^i_{kj},\\
 \textbf{C}^i_{jk}&=& \text{the connection parameters $\textbf{C}^i_{jk}$} ,\\
   \textbf{R}^i_{jk}&=&\delta_k\textbf{N}^i_j-\delta_j\textbf{N}^i_k,\\
    \textbf{P}^i_{jk}&=&\dot{\partial}_k\textbf{N}^i_j-\textbf{F}^i_{jk},\\
    \textbf{S}^i_{jk}&=&\textbf{C}^i_{jk}-\textbf{C}^i_{kj}.
\end{eqnarray*}
The h-,  hv- and v-curvature  tensors of $F\Gamma$ are respectively
given by \cite{r1.6}:
\begin{eqnarray*}
  \textbf{R}^i_{hjk} &=& \mathfrak{A}_{(j,k)}\{{\delta_k\textbf{F}^i_{hj}}
+\textbf{F}^m_{hj}\textbf{F}^i_{mk}\}+\textbf{C}^i_{hm}\textbf{R}^m_{jk}, \\
  \textbf{ P}^i_{hjk}&=& \dot{\partial}_k\textbf{F}^i_{hj}-\textbf{C}^i_{hk\mid
j}+\textbf{C}^i_{hm}\textbf{P}^m_{jk},\\
  \textbf{S}^i_{hjk}&=&\mathfrak{A}_{(j,k)}\{\dot{\partial}_k\textbf{C}^i_{hj}+\textbf{C}^m_{hk}\textbf{C}^i_{mj}\}.
\end{eqnarray*}

\vspace{.1cm} The next table provides a comparison concerning  the
four fundamental linear connections and their associated torsion
and curvature tensors. The explicit expressions of such tensors,
under a generalized $\beta$-conformal change, will be given just
after the table. It should be noted that the geometric objects
associated with Chern connection, Hashiguchi connection and
Berwald connection will be marked by $\star$, $\ast$ and $\circ$
respectively. For Cartan connection no special symbol is assigned.

\begin{center}{\bf{Table 1: Fundamental linear connections}}
\\[0.5cm]
\small{
\begin{tabular}
{|c|c|c|c|c|c|}\hline
 &\multirow{2}{*}{\textbf{} }&\multirow{2}{*}{{\bf Cartan }}&
 \multirow{2}{*}{ {\bf Chern  }} &\multirow{2}{*}{{\bf Hashiguchi }}&\multirow{2}{*}{{\bf
 Berwald}}\\
 &&&&&\\[0.1 cm]\cline{2-6}
&\multirow{2}{*}{$(\textbf{F}^{h}_{ij},
\textbf{N}^{h}_{i},\textbf{C}^{h}_{ij})$}&
\multirow{2}{*}{$(\Gamma^{h}_{ij}, N^{h}_{i}, C^{h}_{ij})$}&
\multirow{2}{*}{$( \Gamma^{h}_{ij}, N^{h}_{i}, 0)$}&
\multirow{2}{*}{$( G^{h}_{ij}, N^{h}_{i}, C^{h}_{ij})$}&
\multirow{2}{*}{$( G^{h}_{ij}, N^{h}_{i}, 0)$}\\
\rule{.5cm}{0pt}
\begin{rotate}{90}\hspace{-.15cm}Connections\end{rotate}\rule{.5cm}{0pt}&&
& & &
\\[0.1 cm]\hline
&\multirow{2}{*}{{\bf (h)h-tors.} $\textbf{T}^{i}_{jk}$}&\multirow{2}{*}{$0$}&\multirow{2}{*}{$0$}&\multirow{2}{*}{$0$}&\multirow{2}{*}{$0$}\\
&\multirow{2}{*}{{\bf (h)hv-tors.}
$\textbf{C}^{i}_{jk}$}&\multirow{2}{*}{$C^{i}_{jk}$} &
\multirow{2}{*}{$0$}&
\multirow{2}{*}{$C^{i}_{jk}$}&\multirow{2}{*}{$0$}
\\[0.1 cm]&&&&&
\\[0.1 cm]\cline{2-6}
&&&&&\\
\rule{.5cm}{0pt}
\begin{rotate}{90}Torsions\end{rotate}\rule{.5cm}{0pt}
&{\bf (v)h-tors.} $\textbf{R}^{i}_{jk}$&${R}^{i}_{jk}$
&$\overdiamond{R}^{i}_{jk}={R}^{i}_{jk}$&
$\overast{R}^{i}_{jk}=R^{i}_{jk}$&
$\overcirc{R}^{i}_{jk}={R}^{i}_{jk}$\\[0.1 cm]
&{\bf (v)hv-tors.}
$\textbf{P}^{i}_{jk}$&${P}^{i}_{jk}=C^{i}_{jk|0}$&
  $\overdiamond{P}^{i}_{jk}=P^{i}_{jk}$&$0$ &$0$
\\[0.1 cm]
&{\bf (v)v-tors.} $\textbf{S}^{i}_{jk}$&$0$ &$0$&$0$ &$0$
\\[0.1 cm]\hline
&&&&&\\
&{\bf h-curv.} $\textbf{R}^{h}_{ijk}$& ${R}^{h}_{ijk}$&
$\overdiamond{R}^{h}_{ijk}$&
$\overast{R}^{h}_{ijk}$&$\overcirc{R}^{h}_{ijk}$
\\[0.1 cm]
&{\bf hv-curv.} $\textbf{P}^{h}_{ijk}$& ${P}^{h}_{ijk}$&
$\overdiamond{P}^{h}_{ijk}$&
$\overast{P}^{h}_{ijk}$&$\overcirc{P}^{h}_{ijk}$
\\[0.1 cm]
\rule{.5cm}{0pt}
\begin{rotate}{90}\hspace{.2cm}Curvatures\end{rotate}\rule{.5cm}{0pt}&{\bf v-curv.}
$\textbf{S}^{h}_{ijk}$& ${S}^{h}_{ijk}$&
$0$&$\overast{S}^{h}_{ijk}=S^{h}_{ijk}$ &$0$
\\[0.1 cm]\hline &&&&&\\
&{\bf h-cov. der.}&
${K}^{i}_{j|k}$&${K}^{i}_{j\stackrel{\star}|k}={K}^{i}_{j|k}$ &
${K}^{i}_{j\stackrel{*}|k}$&${K}^{i}_{j\stackrel{\circ}|k}={K}^{i}_{j\stackrel{*}|k}$
\\[0.1 cm]
&{\bf v-cov. der.}&
${K}^{i}_{j}|_k$&${K}^{i}_{j}\stackrel{\star}|_k=\paa_{k}{K}^{i}_{j}$
&${K}^{i}_{j}\stackrel{*}|_k={K}^{i}_{j}|_k$
&${K}^{i}_{j}\stackrel{\circ}|_k=\paa_{k}{K}^{i}_{j}$
\\
\rule{.2cm}{0pt}
\begin{rotate}{90}\hspace{.2cm} Covariant \end{rotate}\rule{.5cm}{0pt}
\rule{0cm}{0pt}\begin{rotate}{90}
\hspace{.3cm}derivatives\end{rotate}\rule{.5cm}{0pt}&&&&&\\ \hline
\end{tabular}}

\end{center}
\par
It is to be noted that the explicit expressions of the geometric
objects of the above table can be found in \cite{r1.6}.

\par
Now,  by Theorem \ref{final.0}, one  can prove the following
\begin{thm}\label{final}
Under  a generalized $\beta$-conformal change, the torsion and
curvature tensors of Cartan, Chern, Hashiguchi and Berwald
connections, are given by:
\begin{description}
\item[(a)]For Cartan connection, we have
\begin{eqnarray*}
 \overline{C}^i_{jk} &=&C^i_{jk}+M^i_{jk},  \\
   \overline{R}^i_{jk}&=&  R^i_{jk}+\mathfrak{A}_{(j,k)}\{D^i_{j\mid
    k}-(B^i_{jr}+P^i_{jr})D^r_k\},\\
   \overline{P}^i_{jk}&=& P^i_{jk}-D^i_{jk}+B^i_{jk},\\
   \overline{R}^i_{hjk}&=&R^i_{hjk}+2S^i_{hrt}D^r_jD^t_k+M^i_{ht}R^t_{jk}
    -\mathfrak{A}_{(j,k)}\{A^i_{hj\mid k}-A^i_{hjt}D^t_k \\
    &&+D^i_{tj}D^t_{hk}+P^i_{hjt}D^t_k+M^i_{rh}P^r_{jt}D^t_k-M^i_{th}D^t_{j\mid k}
   +M^i_{ht}B^t_{jr}D^r_k\},\\
  \overline{P}^i_{hjk} &=& P^i_{hjk}-2S^i_{thk}D^t_j-A^i_{hjk}+C^i_{tk}D^t_{hj}-C^t_{hk}D^i_{tj}
    +M^i_{th}P^t_{jk}
    +A^i_{jt}M^t_{hk}\\&&-M^i_{tk}A^t_{hj}-M^i_{hk\mid j}+M^i_{th}B^t_{jk}
    +M^i_{tkh}D^t_j
  +C^i_{rh}M^r_{kt}D^t_j -M^i_{hr}C^r_{tk}D^t_j,\\
 \overline{S}^i_{hjk} &=&S^i_{hjk}+\mathfrak{A}_{(j,k)}\{
    C^t_{hk}M^i_{tj}-C^i_{tk}M^t_{hj}-M^i_{tk}M^t_{hj}\},
\end{eqnarray*}

where $ A^i_{jk}=-D^i_{jk}-C^i_{jt}D^t_k,\quad
A^i_{jkh}=\dot{\partial}_hA^i_{jk}$ and
$M^i_{jkh}=\dot{\partial}_hM^i_{jk}.$
 \item[(b)] For Chern Connection, we have
\begin{description}
    \item[]$\overlind{R}^i_{jk}=R^i_{jk}+\mathfrak{A}_{(j,k)}\{D^i_{j\mid
    k}-(B^i_{jr}+P^i_{jr})D^r_k\},$

    \item[]$\overlind{P}^i_{jk}=P^{i}_{jk}-D^i_{jk}+B^i_{jk},$

    \item[]$\overlind{R}^i_{hjk}=\overdiamond{R}^{ i}_{hjk}
    +\mathfrak{A}_{(j,k)}\{D^i_{hj\mid k}-D^i_{hjt}D^t_k
    -D^i_{tj}D^t_{hk}
   -\overdiamond{P}^{ i}_{hjt}D^t_k\},$

    \item[]$\overlind{P}^i_{hjk}=\overdiamond{P}^{i}_{hjk}
    +D^i_{hjk}$,
\end{description}
where $D^i_{jkh}=\dot{\partial}_hD^i_{jk}.$
 \item[(c)]For  Hashiguchi connection, we have
 \begin{eqnarray*}
   \overlina{C}^i_{jk} &=&C^i_{jk}+M^i_{jk},  \\
   \overlina{R}^i_{jk} &=&R^i_{jk}+\mathfrak{A}_{(j,k)}\{D^i_{j{\stackrel{*}\mid}
    k}-B^i_{jr}D^r_k\}, \\
  \overlina{R}^i_{hjk} &=&\overast{R}^i_{hjk}+2 \,\overast{S}^i_{hrt}D^r_jD^t_k+M^i_{ht}\overast{R}^t_{jk}
    -\mathfrak{A}_{(j,k)}\{H^i_{hj{\stackrel{*}\mid} k}-(\dot{\partial}_t H^i_{hj})D^t_k \\
   && +D^i_{tj}D^t_{hk}
   +\overast{P}^i_{hjt}D^t_k-M^i_{th}D^t_{j{\stackrel{*}\mid} k}
   +M^i_{ht}B^t_{jr}D^r_k\}, \\
   \overlina{P}^i_{hjk}&=& \overast{P}^i_{hjk}-2 \,\overast{S}^i_{thk}D^t_j-\dot{\partial}_kH^i_{hj}
    +C^i_{tk}D^t_{hj}-C^t_{hk}D^i_{tj}
    +H^i_{jt}M^t_{hk}-M^i_{tk}H^t_{hj}-M^i_{hk{\stackrel{*}\mid} j} \\
   &&+M^i_{th}B^t_{jk}
   +M^i_{kth}D^t_j
   +C^i_{rh}M^r_{kt}D^t_j-M^i_{hr}C^r_{tk}D^t_j \\
\overlina{S}^i_{hjk}
&=&S^i_{hjk}+\mathfrak{A}_{(j,k)}\{C^t_{hk}M^i_{tj}-C^i_{tk}M^t_{hj}
    -M^i_{tk}M^t_{hj}\},
 \end{eqnarray*}

where $H^i_{jk}=-B^i_{jk}-C^i_{jt}D^t_k.$
 \item[(d)]For  Berwald connection, we have

\begin{description}
    \item[]$\overlinc{R}^i_{jk}=R^i_{jk}+\mathfrak{A}_{(j,k)}\{D^i_{j{\stackrel{\circ}\mid}
    k}-B^i_{jr}D^r_k\},$

    \item[]$\overlinc{R}^i_{hjk}=\overcirc{R}^i_{hjk}
    -\mathfrak{A}_{(j,k)}\{B^i_{hj{\stackrel{\circ}\mid} k}-B^i_{hjt}D^t_k+D^i_{tj}D^t_{hk}
   +\overcirc{P}^i_{hjt}D^t_k\},$

    \item[]$\overlinc{P}^i_{hjk}=\overcirc{P}^i_{hjk}+B^i_{hjk}$,
\end{description}
where $B^i_{jkh}=\dot{\partial}_hB^i_{jk}.$
\end{description}
\end{thm}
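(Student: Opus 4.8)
The plan is to substitute the transformed connection parameters supplied by Theorem \ref{final.0} into the universal formulas for the five torsion tensors and three curvature tensors of an arbitrary Finsler connection recalled at the start of this section, and then expand. Two preliminary remarks keep the bookkeeping tractable. First, since $\overline{N}^r_k=N^r_k+D^r_k$ (Theorem \ref{di}), the barred horizontal frame satisfies $\overline{\delta}_k=\delta_k-D^r_k\,\dot\partial_r$; hence, for any tensor field, the barred Cartan-type horizontal covariant derivative differs from the unbarred one only through terms built from $D^i_{jk}$ and from $D^r_k\dot\partial_r$, and likewise for the Chern, Hashiguchi and Berwald covariant derivatives (with $B^i_{jk}$ replacing $D^i_{jk}$ in the horizontal slot for the latter two). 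Second, $\dot\partial_r N^i_j=\Gamma^i_{jr}+P^i_{jr}$ and $\dot\partial_r D^i_j=B^i_{jr}$ (Theorem \ref{new}), while every transvection by $y$ is dealt with using $D^i_{j0}=B^i_{j0}=D^i_j$ and $D^i_{00}=2D^i$ from the Corollary to Theorem \ref{gammA_2}.

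For the Cartan connection the easy identities come first: $\overline{C}^i_{jk}=C^i_{jk}+M^i_{jk}$ is part (b) of Proposition \ref{c}; $\overline{P}^i_{jk}=\dot\partial_k\overline{N}^i_j-\overline{\Gamma}^i_{jk}=P^i_{jk}+B^i_{jk}-D^i_{jk}$ by Theorems \ref{new} and \ref{gammA_2}; $\overline{R}^i_{jk}=\overline{\delta}_k\overline{N}^i_j-\overline{\delta}_j\overline{N}^i_k$ is expanded with $\overline{\delta}_k=\delta_k-D^r_k\dot\partial_r$, and after rewriting $\delta_k D^i_j$ as $D^i_{j\mid k}$ (the $\Gamma$-remainder cancels under $\mathfrak{A}_{(j,k)}$ because $\Gamma$ is symmetric) one is left with the stated $\mathfrak{A}_{(j,k)}\{D^i_{j\mid k}-(B^i_{jr}+P^i_{jr})D^r_k\}$; and $\overline{S}^i_{hjk}$ follows by inserting $\overline{C}^i_{jk}=C^i_{jk}+M^i_{jk}$ into the $v$-curvature formula and alternating. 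The two remaining tensors $\overline{R}^i_{hjk}$ and $\overline{P}^i_{hjk}$ form the laborious step: one substitutes $\overline{\Gamma}=\Gamma+D$, $\overline{C}=C+M$, $\overline{\delta}_k=\delta_k-D^r_k\dot\partial_r$ and the already-known $\overline{R}^i_{jk}$, $\overline{P}^i_{jk}$ into $R^i_{hjk}=\mathfrak{A}_{(j,k)}\{\delta_k\textbf{F}^i_{hj}+\textbf{F}^m_{hj}\textbf{F}^i_{mk}\}+\textbf{C}^i_{hm}\textbf{R}^m_{jk}$ and $P^i_{hjk}=\dot\partial_k\textbf{F}^i_{hj}-\textbf{C}^i_{hk\mid j}+\textbf{C}^i_{hm}\textbf{P}^m_{jk}$, pushes every horizontal derivative hitting a $D$- or $C$-term into the appropriate covariant derivative plus connection-parameter remainders, and collects. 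The groupings that recur are exactly $A^i_{jk}:=-D^i_{jk}-C^i_{jt}D^t_k$, $A^i_{jkh}:=\dot\partial_hA^i_{jk}$ and $M^i_{jkh}:=\dot\partial_hM^i_{jk}$, while alternating the quadratic terms and applying the Ricci-type identities of the Cartan connection produces the remaining $S^i_{hrt}D^r_jD^t_k$, $D^i_{tj}D^t_{hk}$ and $S^i_{thk}D^t_j$ contributions.

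The remaining three connections require no new idea. The Chern connection of $\overline{F}^n$ carries the same parameters $(\overline{\Gamma}^i_{jk},\overline{N}^i_j)$ as the transformed Cartan connection, so its $(v)h$- and $(v)hv$-torsions transform identically, whereas its $h$- and $hv$-curvatures are obtained by rerunning the above expansion with every $C$- and $M$-term deleted, which yields the Chern covariant derivative and the formulas written in terms of $\overdiamond{R}^i_{hjk}$ and $\overdiamond{P}^i_{hjk}$. For the Hashiguchi connection one repeats the Cartan computation with $\Gamma^i_{jk}$ and $D^i_{jk}$ replaced by $G^i_{jk}$ and $B^i_{jk}$ in the horizontal parameter slot (the vertical parameter still changing by the same $M^i_{jk}$ and the frame shift still being $D^r_k$), which is the reason the auxiliary tensor there is $H^i_{jk}:=-B^i_{jk}-C^i_{jt}D^t_k$; the Berwald case then stands to Hashiguchi exactly as Chern stands to Cartan.

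The main obstacle is organizational rather than conceptual: the $h$- and $hv$-curvature identities are quadratic in the connection parameters, so each substitution produces on the order of a dozen cross terms per connection, and the work lies in (i) choosing the correct covariant derivative ($\mid$ and its Chern, Hashiguchi, Berwald analogues) to absorb the $\delta_kD^i_{hj}$ pieces, (ii) recognising the stable groupings $A^i_{jk}$, $H^i_{jk}$, $M^i_{jkh}$, $D^i_{jkh}$, $B^i_{jkh}$ so the expressions close up, and (iii) keeping the alternation $\mathfrak{A}_{(j,k)}$ and the $v$-torsion $S^i_{hjk}$ (nonzero only for the Cartan and Hashiguchi connections) correctly tracked. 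Nothing beyond Theorem \ref{final.0} is needed; everything downstream is a controlled expansion.
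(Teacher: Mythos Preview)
Your proposal is correct and follows exactly the approach the paper indicates: the paper's own proof is the single sentence ``Now, by Theorem \ref{final.0}, one can prove the following,'' so your plan of substituting the transformed connection parameters from Theorem \ref{final.0} into the universal torsion and curvature formulas and expanding is precisely what is intended. You have simply supplied the organizational detail (the frame shift $\overline{\delta}_k=\delta_k-D^r_k\dot\partial_r$, the identification of $A^i_{jk}$, $H^i_{jk}$, etc., and the parallel treatment of the four connections) that the paper leaves to the reader.
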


\begin{cor}\label{final-1}
   Under  a generalized $\beta$-conformal change, for which
    the covariant vector field $b_i$ is Cartan-parallel  and $\sigma$ is
homothetic, we have:
\begin{description}

\item[(a)]The torsion and
    curvature tensors of Chern  connection are invariant.

\item[(b)]The torsion and
    curvature tensors of  Berwald connection are invariant.

\item[(c)]For Cartan connection, $R^i_{jk}$ and $P^i_{jk}$ are
invariant and

\begin{description}
\item[]$\overline{C}^i_{jk}=C^i_{jk}+M^i_{jk}$,

    \item[]$\overline{R}^i_{hjk}=R^i_{hjk}+M^i_{ht}R^t_{jk},$

    \item[]$\overline{P}^i_{hjk}=P^i_{hjk}
    +M^i_{th}P^t_{jk}-M^i_{hk\mid j}$,

    \item[]$\overline{S}^i_{hjk}=S^i_{hjk}+\mathfrak{A}_{(j,k)}\{C^t_{hk}M^i_{tj}-C^i_{tk}M^t_{hj}
    -M^i_{tk}M^t_{hj}\}.$
\end{description}

\item[(d)]For  Hashiguchi connection, $\overast{R}^i_{jk}$ is
invariant and

\begin{description}
\item[]$\overlina{C}^i_{jk}=C^i_{jk}+M^i_{jk}$,

 \item[]$\overlina{R}^i_{hjk}=\overast{R}^i_{hjk}+M^i_{ht}\overast{R}^t_{jk},$

 \item[]$\overlina{P}^i_{hjk}=\overast{P}^i_{hjk}-M^i_{hk{\stackrel{*}\mid} j}$,

    \item[]$\overlina{S}^i_{hjk}=\overast{S}^i_{hjk}+\mathfrak{A}_{(j,k)}\{C^t_{hk}M^i_{tj}-C^i_{tk}M^t_{hj}
    -M^i_{tk}M^t_{hj}\}.$
\end{description}

\end{description}
\end{cor}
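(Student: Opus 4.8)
The plan is to observe that, under the two stated hypotheses, every difference tensor of Section~1 that enters a curvature correction collapses, whereas the Cartan-tensor difference $M^i_{jk}$ does not, and then to read off each assertion from Theorem \ref{final}. First I would record the vanishings. Since $\sigma$ is homothetic we have $\sigma_i=0$, and since $b_i$ is Cartan-parallel we have $b_{i\mid j}=0$, hence $E_{ij}=F_{ij}=0$. Substituting this into (\ref{eqn.2}) gives $D^i=0$ (the easy half of Theorem \ref{abd}(b)), so $\overline{N}^i_j=N^i_j$; then (\ref{eqn.4}) gives $D^i_{jk}=0$, so $\overline{\Gamma}^i_{jk}=\Gamma^i_{jk}$; and by (\ref{sh1-19}) together with $B^i_{jk}:=\dot{\partial}_kD^i_j$ (Theorem \ref{new}) it follows that $D^i_j=0$ and $B^i_{jk}=0$, so $\overline{G}^i_{jk}=G^i_{jk}$. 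Differentiating, $D^i_{jkh}=B^i_{jkh}=0$, and $D^i_j\equiv 0$ forces $D^i_{j\mid k}=0$ for every one of the four connections. Finally the auxiliary tensors of Theorem \ref{final} satisfy $A^i_{jk}=-D^i_{jk}-C^i_{jt}D^t_k=0$ and $H^i_{jk}=-B^i_{jk}-C^i_{jt}D^t_k=0$, hence $A^i_{jkh}=\dot{\partial}_hA^i_{jk}=0$ and $\dot{\partial}_hH^i_{jk}=0$. In contrast, $M^i_{jk}$ (the tensor $M^l_{ij}$ of Proposition \ref{c}) is built only from $p,p_{-1},p_{02},q_0,f$ and the metric quantities $h_{ij},m_i,b^i,y^i,C_{ijk}$ --- it involves neither $\sigma_i$ nor $b_{i\mid j}$ --- so it survives and is in general nonzero.

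Parts (a) and (b) then require essentially no computation: under the hypotheses the Chern connection $(\overline{\Gamma}^i_{jk},\overline{N}^i_j,0)=(\Gamma^i_{jk},N^i_j,0)$ and the Berwald connection $(\overline{G}^i_{jk},\overline{N}^i_j,0)=(G^i_{jk},N^i_j,0)$ coincide with those of $F^n$, and each torsion and curvature tensor of a Finsler connection is a universal expression in its connection parameters and its nonlinear connection; hence all of them are invariant. Equivalently one may feed the vanishings of the first paragraph into the formulas of Theorem \ref{final}(b) and (d) and watch every correction term disappear; but the structural argument also makes clear why (a) and (b) fail for the Cartan and Hashiguchi connections, whose $C$-slot genuinely becomes $C^i_{jk}+M^i_{jk}$.

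For (c) and (d) I would substitute $D^i=D^i_j=D^i_{jk}=B^i_{jk}=A^i_{jk}=H^i_{jk}=0$, together with all their $\dot{\partial}$- and $\mid$-derivatives, into the formulas of Theorem \ref{final}(a) and (c). Each correction term there is of one of three types: a pure product of $D$'s, which vanishes; a term carrying only $M$'s, which survives; or a ``mixed'' product of an $M$ with an undifferentiated $D^t_j$ or $D^t_k$ --- such as $M^i_{rh}P^r_{jt}D^t_k$, $M^i_{tkh}D^t_j$ or $C^i_{rh}M^r_{kt}D^t_j$ --- which again vanishes. The surviving $M$-only terms are exactly those displayed: $\overline{C}^i_{jk}=C^i_{jk}+M^i_{jk}$; $\overline{R}^i_{jk}=R^i_{jk}$ and $\overline{P}^i_{jk}=P^i_{jk}$ (their corrections being pure $D$-products); $\overline{R}^i_{hjk}=R^i_{hjk}+M^i_{ht}R^t_{jk}$; $\overline{P}^i_{hjk}=P^i_{hjk}+M^i_{th}P^t_{jk}-M^i_{hk\mid j}$; and $\overline{S}^i_{hjk}$ is already $M$-only in Theorem \ref{final}(a). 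Part (d) is identical, now using Theorem \ref{final}(c) and recalling from Table~1 that $\overast{R}^i_{jk}=R^i_{jk}$ and $\overast{S}^i_{hjk}=S^i_{hjk}$, so that in particular $\overast{R}^i_{jk}$ is invariant. I expect the one place needing genuine care to be this last bookkeeping step --- deciding which $M$-terms are ``protected'' (not multiplied by any $D$) --- since that is precisely what pins down the exact form of the surviving corrections in (c) and (d).
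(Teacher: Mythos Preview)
Your proposal is correct and follows exactly the approach implicit in the paper: the corollary is stated without proof because it amounts to combining Theorem~\ref{abd}(b) (so that $D^i=D^i_j=D^i_{jk}=B^i_{jk}=0$ under the hypotheses) with the formulas of Theorem~\ref{final} and discarding every term carrying a $D$, $A$, $B$ or $H$ factor. Your additional structural remark for (a) and (b)---that the Chern and Berwald connections literally coincide with those of $F^n$, so invariance of their torsion and curvature is automatic---is a clean shortcut, but the paper would simply regard this as the obvious specialization of Theorem~\ref{final}(b),(d).
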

~\par
A Finsler  space $F^n$ is called a Landesberg space  if
 the hv-curvature tensor $P^{i}_{hjk}$ of $C\Gamma$
vanishes, or equivalently $P^i_{jk}=0.$

By Corollary  \ref{final-1},  $P^i_{jk}$ is invariant under a
generalized $\beta$-conformal change for which  $b_i$ is
Cartan-parallel  and $\sigma$ is homothetic. Hence, we have the
following

\begin{thm}\label{landesberg} A  Landesberg space remains    Landesberg  under  a
generalized $\beta$-conformal change  if $b_{i}$  is
Cartan-parallel and $\sigma$ is homothetic.
\end{thm}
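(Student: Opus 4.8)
The plan is to derive this as a direct consequence of Corollary \ref{final-1}(c), which already records how the Cartan hv-torsion tensor $P^i_{jk}$ transforms when $b_i$ is Cartan-parallel and $\sigma$ is homothetic. First I would recall the characterization of a Landesberg space: $F^n$ is Landesberg precisely when the hv-curvature $P^i_{hjk}$ of the Cartan connection vanishes, and, by the standard identity relating $P^i_{hjk}$ to the torsion $P^i_{jk}=C^i_{jk|0}$, this is equivalent to $P^i_{jk}=0$. So the entire statement reduces to showing that the vanishing of $P^i_{jk}$ is preserved under the change in question.

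Next I would invoke the hypothesis: the generalized $\beta$-conformal change under consideration has $b_i$ Cartan-parallel and $\sigma$ homothetic. Under exactly these assumptions, Corollary \ref{final-1}(c) asserts that $\overline{P}^i_{jk}=P^i_{jk}$; that is, the (v)hv-torsion tensor of the Cartan connection is invariant. (Equivalently, one can read this off from Theorem \ref{final}(a): in general $\overline{P}^i_{jk}=P^i_{jk}-D^i_{jk}+B^i_{jk}$, and the hypotheses force $D^i=0$ by (\ref{eqn.2}), hence $D^i_{jk}=0$ by (\ref{eqn.4}) and $B^i_{jk}=\dot{\partial}_kD^i_j=0$ as well, so the correction terms drop out.) Therefore, if the original space satisfies $P^i_{jk}=0$, so does the transformed space: $\overline{P}^i_{jk}=0$.

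Finally I would conclude by running the equivalence in the other direction for $\overline{F}^n$: $\overline{P}^i_{jk}=0$ means $\overline{F}^n$ is Landesberg. This closes the argument. The only ``obstacle,'' which is really a bookkeeping point rather than a genuine difficulty, is to be sure that the homothety-plus-parallelism hypotheses do indeed kill both $D^i_{jk}$ and $B^i_{jk}$ simultaneously, so that the invariance statement of Corollary \ref{final-1}(c) applies verbatim; but this has already been established in the proof of Theorem \ref{abd}(b) and in the remarks preceding Corollary \ref{final-1}, so no new computation is needed. Thus the proof is essentially a one-line citation of Corollary \ref{final-1} together with the well-known reformulation of the Landesberg condition.
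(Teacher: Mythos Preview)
Your proposal is correct and follows essentially the same approach as the paper: the paper's proof is literally the one-line observation that, by Corollary~\ref{final-1}, $P^i_{jk}$ is invariant under a generalized $\beta$-conformal change with $b_i$ Cartan-parallel and $\sigma$ homothetic, together with the definition of a Landesberg space via $P^i_{jk}=0$. Your additional remarks (the alternative route through Theorem~\ref{final}(a) and the explicit vanishing of $D^i_{jk}$ and $B^i_{jk}$) are sound elaborations but not needed beyond what the paper already records.
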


By the above theorem and the fact that the hv-curvature tensor
$P^{i}_{hjk}$ of a Riemannian space vanishes identically, we have
\begin{cor}\label{rlandesberg}
Under a generalized $\beta$-conformal change, a Riemannian space
$F^n$ is transformed to a Landesberg space $\overline{F}^n$  if
  $b_{i}$ is Riemann-parallel  and $\sigma$ is homothetic.
\end{cor}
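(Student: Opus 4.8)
The plan is to derive the corollary from Theorem \ref{landesberg} after checking that its two hypotheses are met here and that a Riemannian space is automatically a Landesberg space.

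First I would recall the classical fact that for a Riemannian structure $L(x,y)=\sqrt{g_{ij}(x)\,y^iy^j}$ the metric tensor $g_{ij}$ depends on the position $x$ alone, so that the Cartan tensor $C_{ijk}=\frac14\dot{\partial}_i\dot{\partial}_j\dot{\partial}_k L^2$ vanishes identically. Hence $C^i_{jk}=g^{ir}C_{rjk}=0$, and therefore the torsion tensor $P^i_{jk}=C^i_{jk|0}$ of the Cartan connection vanishes as well, i.e.\ $P^i_{hjk}=0$. Thus $F^n$ is trivially a Landesberg space, which disposes of the requirement imposed on the original space.

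Next I would observe that, since $C_{ijk}=0$, the formula $\Gamma^i_{jk}=\gamma^i_{jk}+g^{it}(C_{jkr}N^r_t-C_{tkr}N^r_j-C_{jtr}N^r_k)$ from the Notations gives $\Gamma^i_{jk}=\gamma^i_{jk}(x)$, so the Cartan connection $(\Gamma^i_{jk},N^i_j,C^i_{jk})$ reduces to the classical Levi-Civita connection of the Riemannian metric (with vanishing vertical part). Consequently the horizontal covariant derivative of the Cartan connection, acting on a positional $1$-form $b_i(x)$, coincides with the ordinary Riemannian covariant derivative, so that requiring $b_i$ to be Cartan-parallel is in this setting the same as requiring $b_i$ to be Riemann-parallel. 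By hypothesis $b_i$ is Riemann-parallel, hence Cartan-parallel, and $\sigma$ is homothetic; thus the hypotheses of Theorem \ref{landesberg} hold for $F^n$, and that theorem yields that $\overline{F}^n$ is again Landesberg. One may equivalently argue via Corollary \ref{final-1}(c): under these hypotheses $\overline{P}^i_{jk}=P^i_{jk}=0$, whence $\overline{F}^n$ is Landesberg. There is essentially no obstacle here; the only point worth stating with care is the identification of the Cartan-parallel and Riemann-parallel notions, which is immediate once $C^i_{jk}=0$ collapses the Cartan connection onto the Levi-Civita connection.
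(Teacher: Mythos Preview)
Your proof is correct and follows essentially the same route as the paper: the paper simply invokes Theorem \ref{landesberg} together with the fact that the hv-curvature $P^i_{hjk}$ of a Riemannian space vanishes identically. Your additional explicit verification that, once $C^i_{jk}=0$, the Cartan connection collapses to the Levi--Civita connection (so that ``Riemann-parallel'' and ``Cartan-parallel'' coincide) is a welcome clarification that the paper leaves implicit.
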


A Finsler space  $F^n$ is called locally Minkowskian  if $F^n$ is a
Berwald space and the h-curvature tensor $R^i_{hjk}$ vanishes.

\begin{thm}\label{minkowski.1}
Assume that  the covariant vector $b_i(x)$ is Cartan-parallel and
$\sigma$ is homothetic. If $F^n$ is locally Minkowskian, then so is the
space $\overline{F}^n$.
 \end{thm}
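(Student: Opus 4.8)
The plan is to combine Theorem \ref{berwald} with the invariance statements in Corollary \ref{final-1}. Recall that a space is locally Minkowskian precisely when it is Berwald \emph{and} the $h$-curvature tensor $R^i_{hjk}$ of the Cartan connection vanishes (equivalently, the $h$-curvature $\overcirc{R}^i_{hjk}$ of the Berwald connection vanishes, since on a Berwald space the Cartan and Berwald $h$-curvatures coincide). So the proof splits naturally into two tasks: first, show $\overline{F}^n$ is Berwald; second, show its relevant $h$-curvature tensor vanishes.

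For the first task I would simply invoke Theorem \ref{berwald}: under a generalized $\beta$-conformal change with $b_i(x)$ Cartan-parallel and $\sigma$ homothetic, a Berwald space is carried to a Berwald space. Since $F^n$ is locally Minkowskian it is in particular Berwald, so $\overline{F}^n$ is Berwald. For the second task I would use part \textbf{(b)} of Corollary \ref{final-1}: under exactly these hypotheses the torsion and curvature tensors of the Berwald connection are invariant, so in particular $\overcirc{\overline{R}}{}^i_{hjk}=\overcirc{R}^i_{hjk}$. Since $F^n$ is locally Minkowskian, $\overcirc{R}^i_{hjk}=R^i_{hjk}=0$, whence $\overcirc{\overline{R}}{}^i_{hjk}=0$. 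Combining the two conclusions, $\overline{F}^n$ is a Berwald space with vanishing $h$-curvature, i.e.\ locally Minkowskian.

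The only subtlety — and the step I would be most careful about — is the bookkeeping between the Cartan $h$-curvature $R^i_{hjk}$ and the Berwald $h$-curvature $\overcirc{R}^i_{hjk}$: the definition of locally Minkowskian is phrased via $R^i_{hjk}$, whereas the clean invariance statement available to us is for the Berwald connection. On a Berwald space $G^i_{jk}=\Gamma^i_{jk}$ and $C^i_{jk}$ contributes only through terms that vanish (the $C^i_{hm}R^m_{jk}$ correction and the derivatives of $C$ along the spray all vanish on a Berwald space), so $R^i_{hjk}=\overcirc{R}^i_{hjk}$ there; applying this on both $F^n$ and $\overline{F}^n$ closes the gap. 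Alternatively one can cite part \textbf{(c)} of Corollary \ref{final-1}, which on a locally Minkowskian space (where $R^t_{jk}=0$) gives directly $\overline{R}^i_{hjk}=R^i_{hjk}+M^i_{ht}R^t_{jk}=R^i_{hjk}=0$ for the Cartan connection. Either route works; I would present the argument via Theorem \ref{berwald} together with Corollary \ref{final-1}\textbf{(c)}, since that keeps everything phrased in terms of the Cartan connection and the definition as stated, with no translation between connections needed.
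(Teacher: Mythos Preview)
Your proposal is correct and matches the paper's own proof: the paper likewise invokes Theorem~\ref{berwald} for the Berwald property and uses the identity $\overline{R}^i_{hjk}=R^i_{hjk}+M^i_{ht}R^t_{jk}$ from Corollary~\ref{final-1}\textbf{(c)}, together with $R^i_{hjk}=0\Rightarrow R^i_{jk}=0$, to obtain $\overline{R}^i_{hjk}=0$. The paper additionally records the converse implication (by transvecting with $y^h$), but that extra step is not needed for the theorem as stated.
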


 \begin{proof}~\par
 We prove firstly that if the covariant vector $b_i(x)$ is
Cartan-parallel  and the change is homothetic, then
$\overline{R}^i_{hjk}$ vanishes if and only if  $R^i_{hjk}$
vanishes.
  By  Corollary \ref{final-1},
 $\overline{R}^i_{hjk}=R^i_{hjk}+M^i_{hm}R^m_{jk}$. If $R^i_{hjk}=0$,  then
 $R^i_{jk}$=0 and hence $\overline{R}^i_{hjk}=0$. Conversely, if
 $\overline{R}^i_{hjk}=0$, then $R^i_{hjk}+M^i_{ht}R^t_{jk}=0$.
 By transvection  with $y^h$, we obtain $R^i_{jk}=0$ and hence
 $R^i_{hjk}=0$.

 Now, the result follows from the above fact and Theorem
 \ref{berwald}.
 \end{proof}

\begin{thm}\label{minkowski.2} Under a generalized $\beta$-conformal change,
 a Riemannian space
$F^n$ is transformed to a locally Minkowskian space
$\overline{F}^n$ if
  $b_{i}$ is Riemann-parallel, $\sigma$ is homothetic and $R^i_{hjk}$ vanishes.
\end{thm}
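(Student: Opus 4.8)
The plan is to reduce Theorem \ref{minkowski.2} to the two structural facts already established: Corollary \ref{rlandesberg} (or rather its underlying mechanism) for the Berwald part, and the curvature computation in Theorem \ref{minkowski.1} for the $R^i_{hjk}$ part. Concretely, I would start from the hypotheses: $L$ is Riemannian on $F^n$, $b_i$ is Riemann-parallel, $\sigma$ is homothetic, and $R^i_{hjk}$ vanishes. The first observation is that a Riemannian space is automatically a Berwald space (indeed $C_{ijk}=0$, so $G^i_{jk}=\gamma^i_{jk}(x)$ depends on $x$ only), and that for a Riemannian structure Riemann-parallelism of $b_i$ coincides with Cartan-parallelism since the Cartan connection of a Riemannian metric reduces to the Levi-Civita connection. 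Hence the hypotheses of Theorem \ref{berwald} are met, and $\overline{F}^n$ is a Berwald space.

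Next I would invoke the curvature half. By the argument inside the proof of Theorem \ref{minkowski.1}: under the assumptions that $b_i$ is Cartan-parallel and $\sigma$ is homothetic, Corollary \ref{final-1}(c) gives $\overline{R}^i_{hjk}=R^i_{hjk}+M^i_{ht}R^t_{jk}$. Since $F^n$ is Riemannian, its h-curvature tensor of the Cartan connection is just the Riemann curvature tensor, which is assumed to vanish, so $R^i_{hjk}=0$ and a fortiori $R^i_{jk}=R^i_{h0k}\cdot(\text{contraction})=0$; therefore $\overline{R}^i_{hjk}=0$. Combining the two parts: $\overline{F}^n$ is a Berwald space with vanishing h-curvature tensor $\overline{R}^i_{hjk}$, which is precisely the definition of a locally Minkowskian space given just above. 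This completes the proof.

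In short, the proof is a direct corollary of Theorems \ref{berwald} and \ref{minkowski.1} specialized to the Riemannian case, the only new ingredients being the elementary remarks that (i) a Riemannian space is Berwald, (ii) for a Riemannian metric the Cartan horizontal covariant derivative agrees with the Levi-Civita one, so Riemann-parallel $=$ Cartan-parallel, and (iii) the Cartan h-curvature of a Riemannian space is the classical Riemann tensor. I do not anticipate a genuine obstacle here; the only point requiring a line of justification is the identification of parallelisms and curvatures in the Riemannian reduction, which follows because $C_{ijk}=0$ forces $N^i_j=\gamma^i_{jk}y^k$, $\Gamma^i_{jk}=\gamma^i_{jk}$, and $C^i_{jk}=0$, collapsing the Cartan connection onto the pullback of Levi-Civita. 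Once that is noted, everything else is quotation of the already-proved theorems.

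\begin{proof}
Since $L$ is Riemannian, $C_{ijk}=0$; hence $N^i_j=\gamma^i_{jk}(x)y^k$, $\Gamma^i_{jk}=\gamma^i_{jk}(x)$ and $G^i_{jk}=\gamma^i_{jk}(x)$, so that $F^n$ is a Berwald space and the Cartan connection reduces to the (pullback of the) Levi-Civita connection of $L$. In particular, the horizontal covariant derivative of Cartan coincides with the Riemannian covariant derivative, so $b_i$ being Riemann-parallel is the same as $b_i$ being Cartan-parallel; likewise the h-curvature tensor $R^i_{hjk}$ of the Cartan connection is the classical Riemann curvature tensor, which vanishes by hypothesis.

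Thus the hypotheses of Theorem \ref{berwald} are satisfied, and the transformed space $\overline{F}^n$ is a Berwald space. On the other hand, by the argument in the proof of Theorem \ref{minkowski.1}, under the assumptions that $b_i$ is Cartan-parallel and $\sigma$ is homothetic, Corollary \ref{final-1}(c) yields
$$\overline{R}^i_{hjk}=R^i_{hjk}+M^i_{ht}R^t_{jk}.$$
Since $R^i_{hjk}=0$ we get $R^t_{jk}=0$, and therefore $\overline{R}^i_{hjk}=0$.

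Consequently, $\overline{F}^n$ is a Berwald space whose h-curvature tensor $\overline{R}^i_{hjk}$ vanishes, which means precisely that $\overline{F}^n$ is locally Minkowskian.
\end{proof}
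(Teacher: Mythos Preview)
Your proof is correct and follows essentially the same approach as the paper. The paper's own proof is a single line, ``Follows directly from Corollary \ref{cor1}'', which handles only the Berwald part; your version spells out both halves (Berwald via Theorem \ref{berwald}/Corollary \ref{cor1}, and $\overline{R}^i_{hjk}=0$ via Corollary \ref{final-1}(c) as in Theorem \ref{minkowski.1}) together with the elementary Riemannian reductions, so it is in fact a more complete write-up of the same argument.
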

\begin{proof}~\par
Follows directly from Corollary \ref{cor1}.
\end{proof}

It is to be  noted  that Theorem \ref{landesberg}, Corollary
\ref{rlandesberg}, Theorem \ref{minkowski.1} and Theorem
\ref{minkowski.2} generalize various results of Shibata
\cite{r1.13}.

\Section{\textbf{Concluding remarks}}

In this paper, we have introduced a generalized  change, which
combines  both $\beta$-change  and conformal change in a general
setting. We have refered to this change as a generalized
$\beta$-conformal change. Many of the known Finsler changes in the
literatures may be obtained from the generalized $\beta$-conformal
change as special cases.

 We will mention some  interesting special cases. In these special cases, we  restrict ourselves to the difference tensor
$D^i_{jk}$ only.\\

 $\bullet$ When the generalized $\beta$-conformal change
(\ref{change}) is a  $\beta$-change: $\overline{L}=f(L, \beta)$,
the difference tensor (\ref{eqn.4}) takes the form:
\begin{eqnarray*}
    D^i_{jk}&=&\{(1/p){g}^{ir}-(s_0b^i+s_{-1}y^i)b^r-(s_{-1}b^i
    +s_{-2}y^i)y^r\}\{B_{jr}b_{0\mid k}+
    B_{kr}b_{0\mid j}\\&&-
      B_{jk}b_{0\mid r}
+F_{rk}Q_j+F_{rj}Q_k+E_{jk}Q_r+ p \,C_{jkm}
D^m_r+V_{jkm}D^m_r\\&&- p\,C_{rkm}D^m_j-V_{rkm}D^m_j-
p\,C_{rjm}D^m_k-V_{rjm}D^m_k\}.
\end{eqnarray*}
This is the case    studied by Shibata
\cite{r1.13}.\\

$\bullet$ When the generalized $\beta$-conformal change
(\ref{change}) is a $\beta$-conformal change:
$\overline{L}=e^\sigma L+\beta$, the difference tensor
(\ref{eqn.4}) takes the form:
 \begin{eqnarray*}
      D^i_{jk}&=&[\tau^{-1}g^{ir}-(1/\overline{L}\tau)(y^ib^r+y^rb^i)+\mu l^il^r][F_{rk}Q_j
       +F_{rj}Q_k+E_{jk}Q_r\\
       &&+(1/2)\Theta_{(j,k,r)}\{2\tau C_{jkm}
      D^m_r+2V_{jkm}D^m_r- K_{jk}\sigma_{r}-(e^\sigma/L)h_{jk}b_{0\mid
      r}\}],
      \end{eqnarray*}
  where     $\tau:=e^\sigma\frac{\overline{L}}{L}$ and $\mu:=\frac{L}{\tau \overline{L}^2}(Lb^2+\beta
  e^\sigma)$.\\
  This is the case  studied by  Abed \cite{r1.2}.\\

$\bullet$ When the generalized $\beta$-conformal change
(\ref{change}) is a generalized Randers  change:
$\overline{L}=L+\beta$, with $L$  Finslerian, the difference
tensor (\ref{eqn.4}) takes the form:
\begin{eqnarray*}
    D^i_{jk}
      &=&[\tau^{-1}g^{ir}-\frac{1}{\overline{L}\tau}(y^ib^r+y^rb^i)+\mu
l^il^r][
      F_{rk}Q_j+F_{rj}Q_k+E_{jk}Q_r\\
      &&+(1/2)\Theta_{(j,k,r)}\{2\tau C_{jkm}
      D^m_r+2V_{jkm}D^m_r-(1/L)h_{jk}b_{0\mid
      r}\}],
      \end{eqnarray*}
      This is the case studied by Matsumoto \cite{r1.8}, Tamim and
      Youssef  \cite{r1.15} and others.\\

$\bullet$ When the generalized $\beta$-conformal change
(\ref{change}) is a  Kropina  change: $\overline{L}=L^2/\beta$,
with $L $ Riemannaian,  the difference tensor (\ref{eqn.4})
  takes the form:
\begin{eqnarray*}
   D^i_{jk}&=&\frac{1}{2L^4b^2\beta^3}\left[L^2b^2g^{ir}-(L^2b^i-2\beta y^i)b^r
   +2(m^2y^i+\beta m^i)y^r\right]\\
   &&.[\beta L^2\left(F_{rk}(3L^2b_j-4\beta y_j)
      +F_{rj}(3L^2b_k-4\beta y_k)
      +E_{jk}(3L^2 b_r-4\beta y_r)\right)\\
    &&+(1/2)\Theta_{(j,k,r)}\{2\beta^5V_{jkm}D^m_r+4L^2(\beta^2h_{jk}+3 L^2m_jm_k)b_{0\mid
      r}\}].
\end{eqnarray*}
This is the case studied by Kropina \cite{r1.4}, Matsumoto \cite{r1.9},  Shibata \cite{r1.12} and others.\\

$\bullet$ When the generalized $\beta$-conformal change
(\ref{change}) is a conformal   change: $\overline{L}=e^\sigma L$,
the difference tensor (\ref{eqn.4}) takes the form:

\begin{eqnarray*}
    D^i_{jk}&=&\sigma_j\delta^i_k+\sigma_k\delta^i_j-\sigma^ig_{jk}+y_jC^i_{km}\sigma^m
    +y_kC^i_{jm}\sigma^m -y^iC_{jkm}\sigma^m
-\sigma_0C^i_{jk}\\
&&+L^2(C_{jkm}C^{mi}_r\sigma^r-C^i_{km}C^m_{jr}\sigma^r-
C^i_{jm}C^m_{kr}\sigma^r).
\end{eqnarray*}
This is the case studied by Hashiguchi
\cite{r1.6}, Izumi \cite{r1.7}, Youssef et al. \cite{r1.17} and others.\\

$\bullet$ When the generalized $\beta$-conformal change
(\ref{change}) is a  C-conformal (resp. h-conformal)   change:
$\overline{L}=e^\sigma L$, with  $\sigma$ enjoying the property
that
      $C^i_{jk}\sigma_i=0$ (resp. $C^i_{jk}\sigma_i=\frac{1}{n-1}C^i\sigma_ih_{jk}$),
the difference tensor (\ref{eqn.4}) takes the form:

$$D^i_{jk}=\sigma_j\delta^i_k+\sigma_k\delta^i_j-\sigma^ig_{jk}
-\sigma_0C^i_{jk}.$$
$$(resp.\quad D^i_{jk}=\sigma_j\delta^i_k+\sigma_k\delta^i_j-\sigma^ig_{jk}-\sigma_0C^i_{jk}
    +\frac{1}{n-1}C^r\sigma_r(y_jh^i_k+y_k h^i_j -y^ih_{jk}
-L^2C^i_{jk})).$$ This is the case studied by Hashiguchi
\cite{r1.6} (resp. Izumi \cite{r1.7}).

 \providecommand{\bysame}{\leavevmode\hbox
to3em{\hrulefill}\thinspace}
\providecommand{\MR}{\relax\ifhmode\unskip\space\fi MR }
\providecommand{\MRhref}[2]{%
  \href{http://www.ams.org/mathscinet-getitem?mr=#1}{#2}
} \providecommand{\href}[2]{#2}
{\vspace{-1cm}}

\end{document}